\definecolor{Gray}{gray}{0.85}
\newcolumntype{d}{>{\columncolor{Gray}}c}
\newcolumntype{b}{>{\columncolor{white}}c}
\newcolumntype{a}{>{\columncolor{lightgray}}c}
\newcolumntype{?}{!{\vrule width 2pt}}
\def\mapright#1#2#3{\smash{\mathop{\hbox to
#3{\rightarrowfill}}\limits^{#1}_{#2}}}
\def\mapleft#1#2#3{\smash{\mathop{\hbox to
#3{\leftarrowfill}}\limits^{#1}_{#2}}}
\def\mapright#1#2{\smash{\mathop{\hbox to 0.90cm{\rightarrowfill}}\limits^{#1}_{#2}}}
\def\mapleft#1#2{\smash{\mathop{\hbox to 0.90cm{\leftarrowfill}}\limits^{#1}_{#2}}}
\def\mapleftright#1#2{\smash{\mathop{\hbox to 0.80cm{\leftarrowfill \rightarrowfill}}\limits^{#1}_{#2}}}
\title{An $O(|E|)$-linear Model for the MaxCut Problem
\footnote{2010 Mathematics Subject Classification:
68Q25 (primary), 68R10, 05C85 (secondary). Keywords and Phrases Integer Programming,
Combinatorial Optimization, Complexity Theory
}}
\author{S\'ostenes L. Lins and Diogo B. Henriques \\ CIn/UFPE/Brazil}
\date{\today}
\date{\today}
\begin{document}

\maketitle

\begin{abstract}
A polytope $P$ is a {\em model} for a combinatorial problem on finite graphs
$G$ whose variables are indexed by the edge set $E$ of $G$ if the points of $P$ with
(0,1)-coordinates are precisely the characteristic vectors of the subset of edges
inducing the feasible configurations for the problem. In the case of the (simple) MaxCut 
Problem, which is the one that concern us here,
the feasible subsets of edges are the ones inducing
the bipartite subgraphs of $G$.
In this paper we introduce a new polytope $\mathbb{P}_{12} \subset \mathbb{R}^{|E|}$ given 
by at most $11|E|$ inequalities, which is a model for the 
MaxCut Problem on $G$. Moreover, the left side 
of each inequality is the sum of at most 4 edge variables with coefficients $\pm1$ 
and right side 0,1, or 2. We restrict our analysis
to the case of $G=K_{z}$, the complete graph in $z$ vertices, where $z$ is
an even positive integer $z\ge 4$. This case is sufficient to study 
because the simple MaxCut problem for general graphs $G$ can be reduced 
to the complete graph $K_z$ by considering the obective function of the associated 
integer programming
as the characteristic vector of the edges in $G \subseteq K_z$. 
This is a polynomial algorithmic transformation.
\end{abstract}

\section{Notation and Preliminaries} 
The {\em MaxCut Problem} \cite{garey1979computers} is one of the first NP-complete problems.
This problem can be stated as follows. Given a graph $G$ does it has a bipartite subgraph
with $n$ edges? It is a very special problem which has been acting as a 
paradigm for great theoretical
developments. See, for instance \cite{goemans1995iaa}, where an algorithm with a 
rather peculiar worse case
performance (greater than 87\%) can be established as a 
fraction of type (solution found/optimum solution). This result constitutes a
landmark in the theory of approximation algorithms. 

Our approach is a theoretical investigation
on polytopes associated to complete graphs. The main result is that there is a set of 
at most $11|E|$ short inequalities (each involving no more than 4 edge variables 
with coefficients $\pm1$) so that
the polytope in $\mathbb{R}^{|E|}$ formed by these inequalities 
has its all integer coordinate points in 1-1 correpondence
with the characteristic vectors of the complete bipartite subgraphs of $K_z, z$ even. 
 
\vspace{3mm}
\noindent
{\bf Thick graphs into closed surfaces.} A surface is {\em closed} 
if it is compact and has no boundary. 
A closed surface is characterized by its Euler characterisitic and the 
information whether or not is orientable.
We use the following 
combinatorial 
counterpart for a graph $G$ cellularly embedded into a closed surface $S$, 
here called a {\em map}. 
{\em Cellularly embedded} means that $S\backslash G$ is a finite set of open disks
each one named a {\em face} of the embedding, whence a {\em surface dual graph}
is well defined. Each edge is replaced in 
the surface by an $\epsilon$-thick version of it, named $\epsilon$-rectangle.
Each vertex $v$ is replaced by a $\delta$-disk, where $\delta$ is the radius of the 
disk whose center is $v$. The $\epsilon$-rectangles and 
the $\delta$-disks form the {\em thick graph of $G$}, denoted by
$T(G)$. By choosing an adequate pair $(\epsilon < \delta)$, the boundary of $T(G)$ 
is a {\em cubic graph} (i.e., regular graph of degree 3), denoted by $C(G)$.
The edges of $C(G)$ can be properly colored with 3 colors: we have {\em short}, 
{\em long}, and {\em angular} colored edges so that at each 
vertex of $C(G)$ the three colors appear. The long (resp. short)
colored edges are the edges
which induced by the long (resp. short) sides of 
the $\epsilon$-rectangles. The angular edges are the other edges.

\vspace{3mm}
\noindent
{\bf Gems or hollow thick graphs.} A cubic 3-edge colored graph $H$ in 
colors $(0,1,2)$ is called a {\em gem} (for {\em g}raph-{\em e}ncoded 
{\em m}ap) if the 
connected components induced by edges of colors
$0$ and $2$ are {\em polygons} with 4 edges. A {\em polygon} in a graph is 
a non-empty subgraph which is connected and has each vertex of degree 2.
A {\em bigon} in $H$ is a connected component of the subgraph induced by all
the edges of any two chosen chosen among the three colors. An $ij$-gon is a
bigon in colors $i$ and $j$.
From $H$ we can easily produce the surface $S$ and 
$G \hookrightarrow S$: attach 
disks to the bigons of $H$ thus obtaining $T(G) \hookrightarrow S$ 
up to isotopy. To get $G$ embedded into $S$ just contract the 
$\delta$-disks to points. Each rectangle becomes a digon and contracting
these digons to their medial lines we get $G \hookrightarrow S$. 
The Euler characteristic of $S$ is $v(H)+f(H)-r(H)$, where
$v(H)$ is the number of 01-gons of 
$G$ (or the number of vertices of $G$), $f(H)$ is the 
number of 12-gons of $H$
(or the number of faces of $G \hookrightarrow S$)
and $r(H)$ is the number of rectangles of $H$ (or the number of 
edges  of $H$). Moreover, $S$ is 
an orientable surface iff and only $H$ is a bipartite graph, see \cite{Lins1980}.
Note that in each gem any edge appear exactly in two bigons: indeed, if the 
edge is of color $i$ it will appear once in a $ij$-gon and once in a $ik$-gon,
where $\{i,j,k\}=\{0,1,2\}$. The surface of a map 
is obtainable 
from the gem by attaching disks to the bigons and identifying the boundaries
along the two occurrences of each edges.

\begin{figure}
\begin{center}
\label{fig:thickening}
\includegraphics[width=12cm]{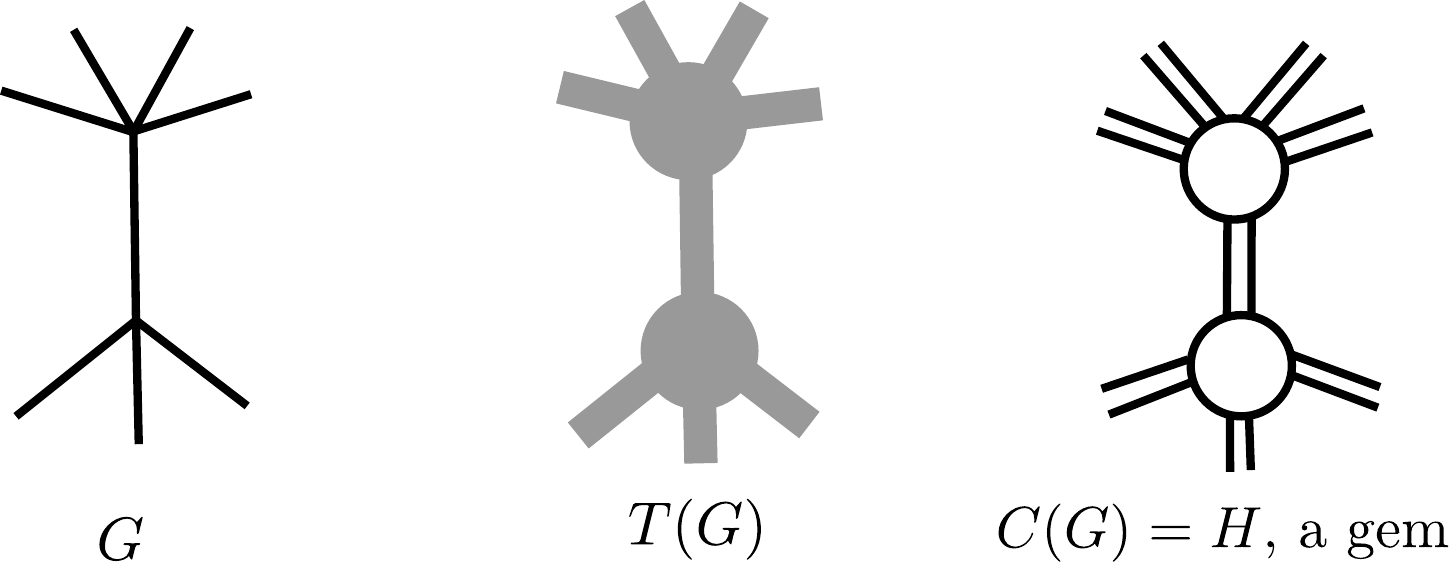} \\
\caption{\sf Neighborhood of an edge in $G \hookrightarrow S$ its thickened version 
and a hollow counterpart: the gem $H$. To get a $Q$-graph from $H$, let $\mu_0$
be the short edges, $\mu_1$ be the angular edges 
(they correspond to angles in $G$), let $\mu_2$ be the long edges, and finally
add the crossing edges $\mu_3$ as the diagonals of the $02$-rectangles of $H$.
Note that the colors of the edges of $Q$ are implicit $0$ is the color of the 
short edges of the 02-rectangles, $2$ of their long edges, $1$ is the color of the 
edges of $H$ not in the rectangles and $3$ is the color of the crossing edges.
} 
\label{fig:thickening}
\end{center}
\end{figure}

\vspace{3mm}
\noindent
{\bf $Q$-graphs and their dualities.}
A {\em perfect matching} in a graph with an even number, $v$, of vertices
is a set of $v/2$ pairwise disjoint edges. A {\em $Q$-graph $Q(\mu_0,\mu_1,
\mu_2,\mu_3)$} is the disjoint union of
4 ordered of its perfect perfect matchings 
$\mu_i, \ i=0,1,2,3$, so that each component of
$\mu_0 \cup \mu_2 \cup \mu_3$ is a complete graph $K_4$. Each such $K_4$ is
called a {\em hyperedge} of the $Q$-graph. The edges in $\mu_1$
are called {\em angular edges} of the $q$-graph. 
The edges in $\mu_0$ are called {\em short edges},
the ones in $\mu_2$, {\em long edges}, the ones in $\mu_3$ are called the
{\em crossing edges}. The graphs $Q(\mu_0,\mu_1,\mu_2,\mu_3)$ and 
$Q(\mu_2,\mu_1,\mu_0,\mu_3)$ are {\em dual $Q$-graphs}.  
The graphs $Q(\mu_0,\mu_1,\mu_2,\mu_3)$ and 
$Q(\mu_3,\mu_1,\mu_2,\mu_0)$ are {\em phial $Q$-graphs}.
The graphs $Q(\mu_0,\mu_1,\mu_2,\mu_3)$ and 
$Q(\mu_0,\mu_1,\mu_3,\mu_2)$ are {\em skew $Q$-graphs}.
To obtain a gem $H$, whence $G$ from a $Q$-graph, 
just remove its last perfect matching. Note that dual $Q$-graphs induce the same
surface $S$ and the same zigzag paths while interchanging 
boundary of faces and coboundaries of vertices. Skew $Q$-graphs induce 
the same graph $G$ and interchange coboundary of faces and zigzag paths.
Phial $Q$-graphs interchange coboundary of vertices and 
zigzag paths while maintaining the 
boundaries of the faces (as cyclic set of edges) in the respective surfaces, see
Fig. \ref{fig:Qdualities}.
Note that the embedding $G \hookrightarrow S$ defines the $Q$-graph.
This enable us to identify 

\begin{center}
\begin{tabular}{c}
$Q_1 = Q(\mu_0,\mu_1,\mu_2,\mu_3) \equiv G_1 \hookrightarrow S^{12} \equiv G_1,$\\
$Q_2 = Q(\mu_2,\mu_1,\mu_0,\mu_3) \equiv G_2 \hookrightarrow S^{12} \equiv G_2,$\\
$Q_3 = Q(\mu_3,\mu_1,\mu_0,\mu_2) \equiv G_2^\sim \hookrightarrow S^{23} \equiv G^\sim_2$,\\
$Q_4 = Q(\mu_3,\mu_1,\mu_2,\mu_0) \equiv G_3 \hookrightarrow S^{23} \equiv G_3$,\\
$Q_5 = Q(\mu_2,\mu_1,\mu_3,\mu_0) \equiv G_3^\sim \hookrightarrow S^{31} \equiv G_3^\sim$, \\
$Q_6 = Q(\mu_0,\mu_1,\mu_3,\mu_2) \equiv G_1^\sim \hookrightarrow S^{31} \equiv G_1^\sim$.
\end{tabular} 
\end{center}

$G_1$ the graph of the dual map, $G_2$ and graph of the phial map $G_3$. 
To get the {\em phial} of a map, we interchange the short edges 
of the rectangles by their diagonals.
There are also the twisted maps $G_1^\sim$, $G_2^\sim$ and $G_3^\sim$. 
There are three closed surfaces
$S^{12}$ where $G_1$ and $G_2$ embed as duals, $S^{23}$ where $G_2^\sim$ and $G_3$
embed as duals and $S^{31}$ where $G_3^\sim$ and $G_1^\sim$ embed as duals.
For the case that concerns us, $G_3$ is $K_z$ with line embedding in $S^{12}$,
$G_1$ is $Pog_h$ and $G_2$ is the $\mathbb{RP}^2$-dual of $Pog_h$, since 
$S^{12}$ is $\mathbb{RP}^2$.  
These dualities were introduced first in \cite{Lins1980} and then in \cite{Lins1982}.

\begin{figure}
\begin{center}
\includegraphics[width=17cm]{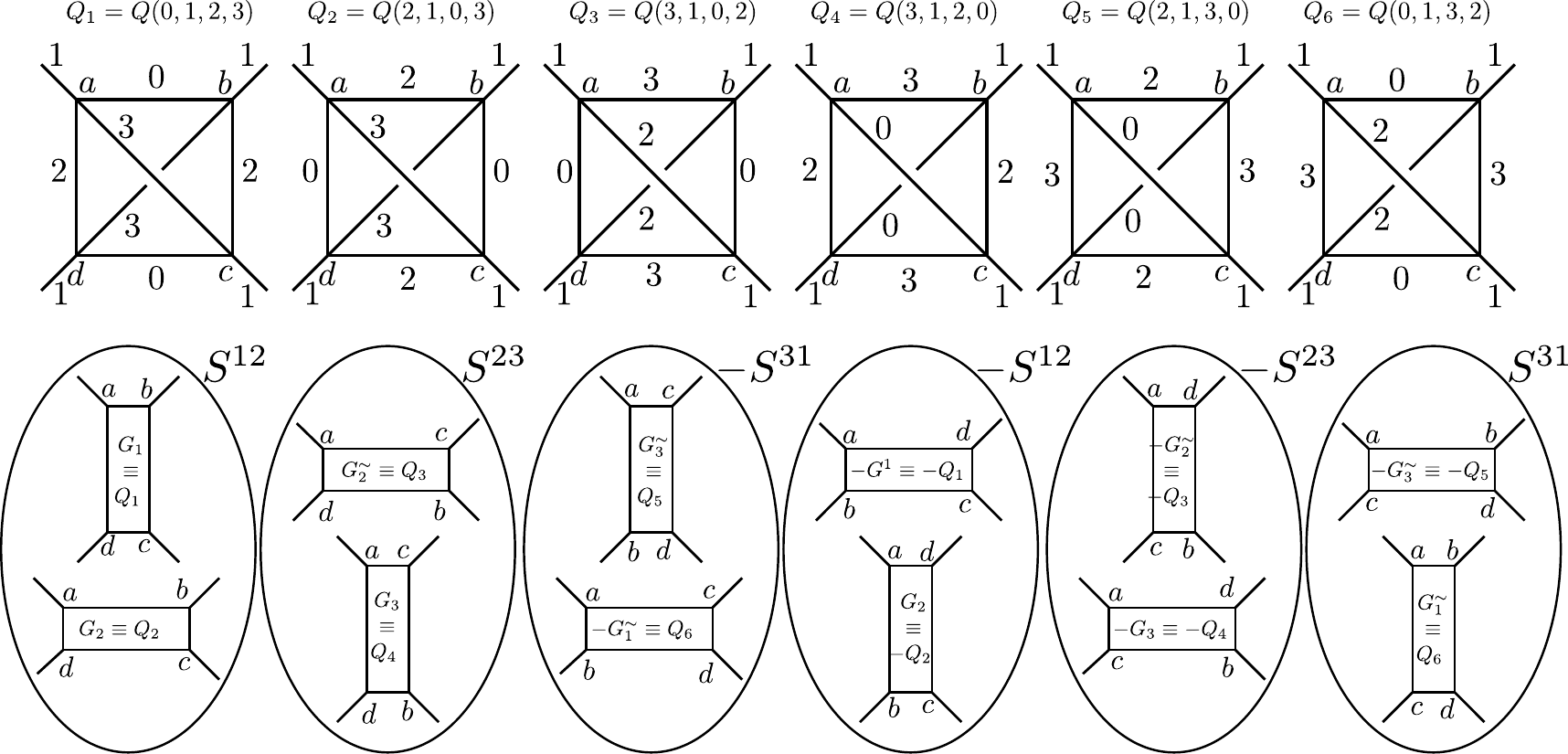}
\caption{\sf $Q$-graph $Q(h,i,j,k)$ is a short form of $Q(\mu_h,\mu_i,\mu_j,\mu_k)$.
We depict the $Q$-dualities of a $Q$-graph (usual surface duality, 
skew duality and phial duality)
which induce 3 graphs $G_1, G_2, G_3$ 
and 3 surfaces: $S^{12}, S^{23},S^{31}$. The minus signs mean a 
local reversal of orientation given by the cyclic order of the rectangle corners
$(a,b,c,d)$. Graphs $G_i$ and $G_i^\sim$ $(i=1,2,3)$ 
are the same: they are just embedded into distinct surfaces in such a way that
the faces of one are the zigzags paths (left-right paths) of the other. The zigzags paths are 
closed and well defined --- they correspond to the 13-gons of $Q$ --- even if the surface is
non-orientable, where is impossible to define left or right globally. 
Taking the dual $(\mathcal{DU})$  
corresponds in the gem to switch the vertical rectangles to horizontal ones (and vice-versa) 
while maintaining the cyclic order 
of the corners of the rectangle (so the surface does not change). 
Taking the skew  $(\mathcal{SK})$  corrresponds to exchange corners linked by one of 
the  two short sides of the rectangles. Starting with $Q(0,1,2,3)$ and applying 
iteratively the composition $\mathcal{SK} \circ \mathcal{DU}$ we get the six
$Q$-graphs which appear in the top of each one of the six surfaces. Taking the 
phial  $(\mathcal{PH})$ is defined as  
$\mathcal{PH}=\mathcal{DU} \circ \mathcal{SK} \circ \mathcal{DU}$, or 
directly by exchanging a pair of corners linked by one of the 2 long 
sides of the rectangle.
If we care for orientation
and all the 3 surfaces are orientable, then there are in fact 12 $Q$-graphs and 6 oriented surfaces. 
Note that $\mathcal{PH}=\mathcal{SK} \circ \mathcal{DU} \circ \mathcal{SK} \circ \mathcal{DU} 
\circ \mathcal{SK} \circ \mathcal{DU} \circ \mathcal{SK} \circ  \mathcal{DU} \circ \mathcal{SK}$.
But orientation does not concern us here. Therefore there are only 3 pairs of skew maps, each pair
inducing the same graph 
$\{G_1: \hookrightarrow S^{12},G_1^\sim: \hookrightarrow S^{31}\}$,
$\{G_2: \hookrightarrow S^{12},G_2^\sim: \hookrightarrow S^{23}\}$,
$\{G_3: \hookrightarrow S^{23},G_3^\sim: \hookrightarrow S^{31}\}$,
and 3 surfaces $S^{12}, S^{23}, S^{31}$.
}  \label{fig:Qdualities}
\end{center}
\end{figure}

\section{Reformulation of the MaxCut Problem}
\vspace{3mm}
\noindent

Let $G_1$ be an arbitrary map of a graph into a surface, orientable or not, and
$G_2, G_3$, denote respectively the dual and phial of $G_1$.
Let $E$ denote the common set of edges for graphs $G_1, G_2, G_3$: they are identified
via the hyperedges of the associated $Q$-graph.

\vspace{3mm}
\noindent
{\bf Vector spaces from graphs.} For subset of edges $A$ 
and $B$ let $A + B$ denote their symmetric difference. 
This is closely related with the sum in $GF(2)$ via the characterisitc vectors.
Thus an element is in $$A_1+ A_2 + \ldots + A_p$$ if it belongs
to an odd number of $A_i$'s. This sum on subsets of edges
becomes an associative binary operation and 
$2^E$, the set of all subsets of $E$, 
becomes a vector space via $+$ on subsets, or, what amounts to be the same,
the mod 2 sum of characteristic vectors of the subsets of edges.
There is a distinguished basis given by the 
characteristic vectors of the singletons. We say that subset of edges
$A$ is orthogonal to subset of edges $B$ if $|A \cap B|$ is even. 
If $W \subseteq 2^E$ is a subspace, then 
$W^\perp = \{u \in 2^E : u \perp w, \forall w\in W\}$ is also a subspace and 
$dim W + dim\ W^\perp = |E|$. Let $\mathcal{V}_i$ $(i=1,2,3)$ 
be the subspace of $2^E$ generated
by the coboundary of the vertices of $G_i$, or {\em coboundary space} of $G_i$.
The {\em cycle space} of $G_i$ is  $\mathcal{V}_i^\perp$. The {\em face space}
of $G_i$, denoted by $\mathcal{F}_i$, 
is the subspace of $\mathcal{V}_i^\perp$ generated by the face boundaries
of $G_i$.  The {\em zigzag space}
of $G_i$, denoted by $\mathcal{Z}_i$, 
is the subspace of $\mathcal{V}_i^\perp$ generated by the zigzag paths
of $G_i$. Note that $G_i$ is rich iff
$\mathcal{V}_i^\perp$=$\mathcal{F}_i$+$\mathcal{Z}_i$. In particular,
$\mathcal{F}_1 = \mathcal{V}_2$ and $\mathcal{Z}_1=\mathcal{V}_3$.

\begin{theorem}[Absortion property] 
Let $(i,j,k)$ denote a permutation of $\{1,2,3\}$.
Then  $\mathcal{V}_i \cap \mathcal{V}_j \subseteq \mathcal{V}_k$.
\end{theorem}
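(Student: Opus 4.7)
The plan is to lift characteristic vectors of vertex subsets of the $G_i$ to $\mathbb{F}_2$-valued functions on the common vertex set $V(Q)$ of the $Q$-graph. Via the identification $V(G_1)=\{\,01\text{-gons}\,\}$, a set $U_1\subseteq V(G_1)$ corresponds to the function $\tilde\chi_1:V(Q)\to \mathbb{F}_2$ that is constant on each $01$-gon; analogously $U_2\subseteq V(G_2)$ lifts to a $12$-constant $\tilde\chi_2$ and $U_3\subseteq V(G_3)$ to a $13$-constant $\tilde\chi_3$. For any hyperedge $e$, the $K_4$ with vertex set $\{a,b,c,d\}$ carries three perfect matchings (in colors $0,2,3$) giving three pairings of $\{a,b,c,d\}$; label them so that the $0$-, $2$-, $3$-partitions are $\{\{a,b\},\{c,d\}\}$, $\{\{a,c\},\{b,d\}\}$, $\{\{a,d\},\{b,c\}\}$ respectively. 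With these conventions the coboundary formulas become $\delta(U_1)(e)=\tilde\chi_1(a)+\tilde\chi_1(c)$, $\delta(U_2)(e)=\tilde\chi_2(a)+\tilde\chi_2(b)$, and (for the $U_3$ to be constructed) $\delta(U_3)(e)=\tilde\chi_3(a)+\tilde\chi_3(b)$, each obtained by choosing one representative from each block of the relevant partition.

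Assume $S\in \mathcal{V}_1\cap \mathcal{V}_2$ with $\delta(U_1)=\delta(U_2)=S$, and set $\zeta=\tilde\chi_1+\tilde\chi_2$ on $V(Q)$. I would verify two local properties. First, $\zeta$ is constant on every $1$-edge, since each such edge lies inside a single $01$-gon (so $\tilde\chi_1$ is constant on its endpoints) and inside a single $12$-gon (so $\tilde\chi_2$ is), hence so is their sum. Second, within each $K_4$ the function $\zeta$ is constant on the two $3$-edges: using the labeling above,
\[
\zeta(a)+\zeta(d)\;=\;\bigl(\tilde\chi_1(a)+\tilde\chi_1(d)\bigr)+\bigl(\tilde\chi_2(a)+\tilde\chi_2(d)\bigr)\;=\;S(e)+S(e)\;=\;0,
\]
because $a$ and $d$ lie in different blocks of both the $0$- and $2$-partitions, making each bracketed sum equal to the coboundary value $S(e)$; symmetrically $\zeta(b)=\zeta(c)$, and $\{a,d\},\{b,c\}$ are exactly the $3$-classes of the $K_4$.

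Combining the two facts, $\zeta$ is constant along every edge of color $1$ or $3$, hence on every connected component of $\mu_1\cup\mu_3$, i.e.\ on every $13$-gon. So $\zeta$ is the lift of the characteristic vector of a unique $U_3\subseteq V(G_3)$. A final line of computation,
\[
\delta(U_3)(e)\;=\;\zeta(a)+\zeta(b)\;=\;\bigl(\tilde\chi_1(a)+\tilde\chi_1(b)\bigr)+\bigl(\tilde\chi_2(a)+\tilde\chi_2(b)\bigr)\;=\;0+S(e)\;=\;S(e),
\]
(using that $a,b$ share a $0$-block but lie in distinct $2$-blocks) shows $S=\delta(U_3)\in \mathcal{V}_3$. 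This settles the case $(i,j,k)=(1,2,3)$; the remaining five permutations follow at once from the $S_3$-symmetry of the $Q$-graph exhibited in Figure \ref{fig:Qdualities}, under which the skew and phial dualities permute the ordered triple $(\mu_0,\mu_2,\mu_3)$ and hence the ordered triple $(\mathcal{V}_1,\mathcal{V}_2,\mathcal{V}_3)$. The only real difficulty is the triple bookkeeping of which $K_4$-pairing corresponds to which color and which coboundary space; once that is fixed the entire argument reduces to a few one-line $\mathbb{F}_2$ computations inside a single hyperedge.
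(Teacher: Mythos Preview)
Your argument is correct. The lift of vertex-subset indicators to $\mathbb{F}_2$-functions on $V(Q)$, the identification of $V(G_1),V(G_2),V(G_3)$ with the $01$-, $12$-, $13$-gons, and the local $K_4$ computation all check out; in particular the key point that a $\mu_3$-edge joins vertices lying in \emph{different} blocks of both the $\mu_0$- and the $\mu_2$-partition is exactly what makes $\zeta=\tilde\chi_1+\tilde\chi_2$ constant along $\mu_3$. The appeal to the $S_3$-action permuting $(\mu_0,\mu_2,\mu_3)$ while fixing $\mu_1$ is also legitimate, since this action permutes the three families of bigons $\{01,12,13\}$ and hence the triple $(\mathcal{V}_1,\mathcal{V}_2,\mathcal{V}_3)$.

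It is worth stressing how this compares with the paper: the paper does \emph{not} prove the absorption property at all. It merely cites Theorem~2.5 of \cite{Lins1980} and remarks that ``the proof is long and we do not know a short one.'' Your proof is precisely such a short one: once the $Q$-graph formalism is in place, the entire content reduces to observing that in a $K_4$ the three perfect matchings satisfy the relation ``different in two implies same in the third,'' which is the combinatorial shadow of $\mathcal{V}_i\cap\mathcal{V}_j\subseteq\mathcal{V}_k$ at the level of a single hyperedge. So rather than matching the paper's approach, you have supplied what the authors said they lacked.
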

\begin{proof}
For a proof we refer to Theorem 2.5 of \cite{Lins1980}. The proof is long and 
we do not know a short one. This is a basic property which opens the way for a
perfect abstract symmetry among vertices, faces and zigzags.
A useful consequence of
this property is that $
\mathcal{V}_1 \cap \mathcal{V}_2 = 
\mathcal{V}_1 \cap \mathcal{V}_3 = 
\mathcal{V}_2 \cup \mathcal{V}_3 = 
\mathcal{V}_1 \cap \mathcal{V}_2 \cup \mathcal{V}_3$.
\end{proof}

The {\em cycle deficiency} of 
$G_i$ is $cdef(G_i)=dim((\mathcal{V}_i^\perp) / (\mathcal{V}_j+\mathcal{V}_k))$.
Map $G_i$ is {\em rich} if its cycle deficiency is 0, implying, in particular,
$\mathcal{V}_i=\mathcal{V}_j^\perp \cap \mathcal{V}_k^\perp$, for all permutations
$(i,j,k)$ of $(1,2,3)$.

\begin{corollary}
\label{cor:deficity}
Maps $G_1, G_2, G_3$ have the same cycle deficiency.
\end{corollary}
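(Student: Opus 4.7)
The plan is to express $cdef(G_i)$ as a function of the dimensions of $\mathcal{V}_1,\mathcal{V}_2,\mathcal{V}_3$ and $\mathcal{V}_1\cap\mathcal{V}_2\cap\mathcal{V}_3$ alone, and observe that this expression is symmetric in the indices. I will need two ingredients: the containment $\mathcal{V}_j+\mathcal{V}_k\subseteq \mathcal{V}_i^\perp$ (so the quotient in the definition of $cdef$ is meaningful and has the dimension one expects from inclusion–exclusion), and the collapse of pairwise intersections to the triple intersection, which is exactly the absorption property just proved.

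For the containment, I would use the identifications stated right before the theorem: $\mathcal{F}_1=\mathcal{V}_2$ and $\mathcal{Z}_1=\mathcal{V}_3$ (and the analogous equalities obtained by the $Q$-dualities of Fig.~\ref{fig:Qdualities}). Face boundaries and zigzag paths of $G_1$ are each orthogonal to every vertex coboundary of $G_1$ (each edge meets a face boundary or a zigzag an even number of times on either side of a vertex), so $\mathcal{V}_2,\mathcal{V}_3\subseteq\mathcal{V}_1^\perp$, hence $\mathcal{V}_2+\mathcal{V}_3\subseteq\mathcal{V}_1^\perp$; the permuted versions are obtained identically. This gives
\[
cdef(G_i)=\dim\mathcal{V}_i^\perp-\dim(\mathcal{V}_j+\mathcal{V}_k)=|E|-\dim\mathcal{V}_i-\dim(\mathcal{V}_j+\mathcal{V}_k).
\]

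Next I apply the standard dimension formula for a sum of subspaces:
\[
\dim(\mathcal{V}_j+\mathcal{V}_k)=\dim\mathcal{V}_j+\dim\mathcal{V}_k-\dim(\mathcal{V}_j\cap\mathcal{V}_k).
\]
By the Absorption Theorem, $\mathcal{V}_j\cap\mathcal{V}_k\subseteq\mathcal{V}_i$, so $\mathcal{V}_j\cap\mathcal{V}_k=\mathcal{V}_1\cap\mathcal{V}_2\cap\mathcal{V}_3$; set $W=\mathcal{V}_1\cap\mathcal{V}_2\cap\mathcal{V}_3$. Substituting,
\[
cdef(G_i)=|E|-\dim\mathcal{V}_1-\dim\mathcal{V}_2-\dim\mathcal{V}_3+\dim W,
\]
whose right–hand side is visibly symmetric in the three indices. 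Consequently $cdef(G_1)=cdef(G_2)=cdef(G_3)$.

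The only step where care is required is the containment $\mathcal{V}_j+\mathcal{V}_k\subseteq\mathcal{V}_i^\perp$: without it the quotient defining $cdef(G_i)$ is not the difference of dimensions. Once this is in place, the argument is pure linear algebra driven by the absorption property, so the work already done in Theorem on absorption carries all the weight; no further combinatorial input is needed.
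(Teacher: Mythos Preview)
Your argument is correct and is essentially the paper's own proof: both compute $cdef(G_i)=\dim\mathcal{V}_i^\perp-\dim(\mathcal{V}_j+\mathcal{V}_k)$, expand $\dim(\mathcal{V}_j+\mathcal{V}_k)$ by inclusion--exclusion, and use the absorption property to replace $\mathcal{V}_j\cap\mathcal{V}_k$ by the triple intersection, yielding the symmetric expression $|E|-\dim\mathcal{V}_1-\dim\mathcal{V}_2-\dim\mathcal{V}_3+\dim(\mathcal{V}_1\cap\mathcal{V}_2\cap\mathcal{V}_3)$. The only cosmetic difference is that the paper writes $\dim\mathcal{V}_1=v-1$, $\dim\mathcal{V}_2=f-1$, $\dim\mathcal{V}_3=z-1$ explicitly, whereas you keep the dimensions abstract; your version is in fact slightly more careful in justifying the containment $\mathcal{V}_j+\mathcal{V}_k\subseteq\mathcal{V}_i^\perp$.
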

\begin{proof}
Assume $G_1$ has $e$ edges, $v$ vertices, $f$ faces and $z$ zigzags.
Then  $$cdef(G_1)=(e-v+1)-((f-1)+(g-1)-\gamma) = e-(v+f+g)+(3+\gamma),$$ 
where $\gamma=dim(\mathcal{V}_1 \cap \mathcal{V}_2 \cup \mathcal{V}_3).$ 
The Corollary follows because $v+f+z$ is invariant under permutations of $(v,f,z)$.
\end{proof}

Thus richness is a symmetric property on the maps $G_1$, $G_2$, $G_3$: we have $G_1$ 
is rich $\Leftrightarrow G_2$ is rich $\Leftrightarrow G_3$ is rich.
A subgraph is {\em even} if each of its vertices has even degree. 

\begin{corollary}
\label{cor:even}
If $F \subseteq E$ induces an even subgraph of $G_i$, then $F \in \mathcal{V}_i^\perp$.
\begin{proof}
Any polygon of $G_i$ is in $\mathcal{V}_i^\perp$. Note that 
$F$ is a sum of polygons and so, $F \in \mathcal{V}_i^\perp$.
\end{proof}
\end{corollary}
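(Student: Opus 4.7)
The plan is to verify orthogonality directly on a generating set of $\mathcal{V}_i$, rather than to argue via a decomposition into polygons (although that route also works). Since $\mathcal{V}_i$ is defined as the subspace of $2^E$ spanned by the coboundaries $\delta(v)$ of the vertices $v$ of $G_i$, and since $\perp$ is a bilinear relation over $GF(2)$, it suffices to check that $F$ is orthogonal to each generator, i.e.\ that $|F \cap \delta(v)|$ is even for every vertex $v$ of $G_i$.

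The key observation is that $|F \cap \delta(v)|$ is precisely the degree of the vertex $v$ in the subgraph of $G_i$ induced by $F$. This identification is immediate from the definition of $\delta(v)$ as the set of edges incident to $v$: an edge lies in $F \cap \delta(v)$ exactly when it is an edge of $F$ incident to $v$. By the hypothesis that $F$ induces an even subgraph of $G_i$, every such degree is even, so $|F \cap \delta(v)|$ is even for every $v$, and therefore $F \perp \delta(v)$ for every generator $\delta(v)$ of $\mathcal{V}_i$.

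To finish, I would invoke the fact, stated just before Theorem 2.1 in the excerpt, that the set of vectors orthogonal to a subspace is itself a subspace, so it is enough to test orthogonality on generators. Concretely, if $w \in \mathcal{V}_i$ is written as $w = \sum_{v \in S} \delta(v)$ for some $S \subseteq V(G_i)$, then $|F \cap w| \equiv \sum_{v \in S} |F \cap \delta(v)| \pmod{2}$, which is a sum of even numbers and hence even. Thus $F \in \mathcal{V}_i^\perp$, as claimed.

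There is essentially no obstacle here: the proof is a one-line translation between the combinatorial notion of ``even degree at every vertex'' and the algebraic notion of ``orthogonal to every vertex coboundary'', together with the general linear-algebraic principle that orthogonality on generators extends to orthogonality on the whole subspace. The author's shorter argument (that $F$ is a symmetric difference of polygons, each of which is already known to lie in $\mathcal{V}_i^\perp$) is equivalent, but does require invoking the classical fact that an even subgraph decomposes as a $GF(2)$-sum of polygons; the direct check above avoids even that small prerequisite.
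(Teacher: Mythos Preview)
Your argument is correct, and it takes a genuinely different route from the paper's. The paper proves the corollary by first observing that every polygon lies in $\mathcal{V}_i^\perp$ and then invoking the classical fact that an even subgraph is a $GF(2)$-sum of polygons, so $F$ inherits membership in the subspace $\mathcal{V}_i^\perp$. You instead bypass the polygon decomposition entirely and check orthogonality directly against the generating set $\{\delta(v): v \in V(G_i)\}$ of $\mathcal{V}_i$, using the identification of $|F \cap \delta(v)|$ with $\deg_F(v)$ and then extending by $GF(2)$-bilinearity. Your approach is more self-contained: it needs only the definitions of coboundary and orthogonality given in the paper and avoids appealing to the (admittedly standard) decomposition of even subgraphs into cycles. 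The paper's approach, on the other hand, is the one that aligns with the usual presentation of the cycle space as generated by polygons, and it makes explicit the structural reason the corollary holds. Both are valid; yours is arguably the cleaner one-line proof, while the paper's keeps the cycle-space viewpoint in the foreground.
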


A subset $F\subseteq E$ is a {\em strong $O$-join in $G_1$} if it induces a subgraph 
so that at each vertex $v$ and each face $f$ the parity of the 
number of $F$-edges in the coboundary of $v$ and in the boundary of $f$
coincides with the parity of the degrees of $v$ and $f$, respectively. 
Note that $F$ is a strong $O$-join iff 
$\overline{F}=E\backslash F \in \mathcal{V}_1^\perp \cap \mathcal{V}_2^\perp$.
See Fig. \ref{fig:rozig4}, where we depict a strong $O$-join $T$ given by the thick edges
in $G_1=Pog_4$. In the case of a rich $G_3$, $F$ is a 
strong $O$-join of $G_1$ iff $\overline{F} \in \mathcal{V}_3$.

The {\em coboundary of a set of vertices $W$} is the set of edges which has
one end $W$ and the other in $V\backslash W$.
A subset of edges is a {\em coboundary in a graph} iff it induces a bipartite
subgraph: the edges of this bipartite graph 
constitutes the coboundary of the set of vertices in the same class of the bipartition.
A {\em cut} in combinatorics is frequently defined as a minimal coboundary. Thus it is 
preferable to talk about maximum coboundary instead of talking about maximum cut to avoid
misunderstading.

\begin{theorem}[Reformulation of MaxCut problem]
 Let $G_3$ be a rich map. The maximum cardinality of a coboundary in $G_3$
 has cardinality equal to $|E|$ minus the minimum cardinality 
 of a strong $O$-join in $G_1$.
 \begin{proof}
 The result follows because the complement of a strong $O$-join $F$ 
 is an even subgraph 
 in graphs $G_1$ and $G_2$. Thus, $\overline{F} \in 
 \mathcal{V}_1^\perp \cap \mathcal{V}_2^\perp=\mathcal{V}_3$. 
  The last equality follows because $G_3$ is rich. Note that the elements 
  of $\mathcal{V}_3$ are 
  precisely the coboundaries of $G_3$.
 \end{proof}
\end{theorem}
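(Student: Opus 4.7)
The plan is to translate the statement into an identity between two subsets of $2^E$ identified by the complementation map $F \mapsto \overline{F}=E\setminus F$, and then deduce the cardinality relation by noting that complementation sends a set of size $k$ to one of size $|E|-k$.

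First I would unpack the definition of a strong $O$-join. The excerpt already records the characterization: $F$ is a strong $O$-join in $G_1$ if and only if $\overline{F}\in\mathcal{V}_1^\perp\cap\mathcal{V}_2^\perp$. I would either take this as given or give the one-line check: the parity condition at each vertex $v$ (resp.\ face $f$) of $G_1$ says that $\overline{F}$ meets the coboundary of $v$ (resp.\ the boundary of $f$) in an even number of edges, which is exactly orthogonality of $\overline{F}$ to every generator of $\mathcal{V}_1$ (resp.\ of $\mathcal{V}_2=\mathcal{F}_1$).

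Next I would invoke the richness hypothesis on $G_3$. By Corollary \ref{cor:deficity} richness is symmetric in the three maps, hence $G_1$ and $G_2$ are rich as well; in particular one has the identity $\mathcal{V}_3=\mathcal{V}_1^\perp\cap\mathcal{V}_2^\perp$ mentioned just after the definition of richness. Combined with the previous step this yields the key equivalence: $F$ is a strong $O$-join in $G_1$ if and only if $\overline{F}\in\mathcal{V}_3$, i.e.\ $\overline{F}$ is a coboundary of $G_3$. The map $F\mapsto \overline{F}$ is therefore a bijection between strong $O$-joins of $G_1$ and coboundaries of $G_3$.

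Finally, since $|\overline{F}|=|E|-|F|$, minimizing $|F|$ over strong $O$-joins of $G_1$ is the same as maximizing $|\overline{F}|$ over coboundaries of $G_3$, and the stated identity follows by rearranging. The main obstacle is really upstream of this proof: it lives in the Absorption Theorem and in the fact that richness of $G_3$ forces $\mathcal{V}_3$ to coincide with $\mathcal{V}_1^\perp\cap\mathcal{V}_2^\perp$. Once those are in hand, the argument is a two-line bookkeeping of complementation, so no delicate calculation is required; the only point I would double-check is the symmetry of richness, which uses the invariance of $v+f+z$ under permutations established in Corollary \ref{cor:deficity}.
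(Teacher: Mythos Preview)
Your proof is correct and follows essentially the same line as the paper's: both use the characterization of strong $O$-joins via $\overline{F}\in\mathcal{V}_1^\perp\cap\mathcal{V}_2^\perp$, richness to identify this intersection with $\mathcal{V}_3$, and the fact that $\mathcal{V}_3$ consists exactly of the coboundaries of $G_3$. Your appeal to the symmetry of richness (Corollary~\ref{cor:deficity}) is a harmless detour, since the identity $\mathcal{V}_3=\mathcal{V}_1^\perp\cap\mathcal{V}_2^\perp$ is already recorded in the paper as a direct consequence of $G_3$ itself being rich.
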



\section{Projective Orbital Graphs}

{\bf Motivation to restrict to $G_3=K_z, z$ even}.
In order to use the $Q$-dualities and rich maps 
we must start with a rich map $G_3$. Our universal choice for
$G_3$ is the complete graph $K_z$ with $z$ even. There are various reasons for this choice.
(a) Every graph is a subgraph of some $K_z$. (b) It is very easy to embed $G_3=K_z$ in some surface 
so that its phial $G_1$ and dual of the phial $G_2$ are embedded into the real projective plane, 
$\mathbb{PR}^2$: the simplest
closed surface after the sphere. (c) There is a combinatorial
well structured 
generator subset of the cycle space of $K_z$,
$\mathcal{V}_3$, given by all but one coboundaries of the vertices of $G_1$ and the all but one 
coboundaries of the vertices of $G_2$ (faces of $G_1$). 
Moreover each one of these generators correspond
to a polygon in $K_z$ having either 3 or 4 edges. Finally, (d) the maximum cardinality of
a bipartite subgraph of an arbitrary graph $G$ with $z$ vertices can be obtained by solving the 
integer 0-1 programming problem 
using the characteristic vector of the edge set of $G$ relative to the complete graph $G_3=K_z$ as
objective function. If $z$ is odd attach a pendant edge $e$ to $G$, and solve the problem for 
$G+e \subset K_{z+1}$.
All these properties justify the restriction to complete graphs with an even number of vertices.

Our model will use inequalities induced
signed forms of these generating polygons 
in all possible ways. So it is paramount to have short
polygons as generators, otherwise an exponential number of 
inequalities arises from the beginning. Our approach
starts by constructing graph $G_1=Pog_h$ embedded into $\mathbb{RP}_2$, 
and its description follows.

\vspace{3mm}
\noindent
{\bf The projective orbital graphs}. 
Let $h \in \{\ 1,\frac{3}{2},2,\frac{5}{2},3,\frac{7}{2},4,\frac{9}{2},\ldots\}$.
The {\em Projective orbital graph or $Pog(h)$} is defined as follows. 

\noindent
{\bf Case $h$ integer.}
If $h$ is an integer, then $Pog(h)$ consists
of $h$ concentric circles (orbits) having each $z=4h$ vertices equally spaced. In the 
complex plane the $hz$ vertices of $Pog(h)$ are 
$\{k \exp(2\pi i j/z): k=1,2,\ldots, h, \hspace{2mm} j=0,\ldots, z-1\}.$
Each one of the $h$ orbits of $Pog(h)$
induces $z$ edges as closed line segments in the complex plane: 
$$\{{[k \exp(2\pi i j/z,k \exp(2\pi i (j+1)/z})]): j=1,\dots,z \}.$$ 
These edges are called {\em orbital edges}.
There are also $zh$ {\em radial segments} being $z(h-1)$ {\em radial edges} and $z$ {\em pre-edges}:
$\{{[k \exp(2\pi i j/z),(k+1)\exp(2\pi i j/z}]): k=1,\dots,h,\hspace{2mm} j=1,\ldots,z \}.$
Note that the $z$ points $\{{[(h+1)\exp(2\pi i j/z}]): j=1,\ldots,z \}$ are not vertices of $Pog(h)$
and are called {\em auxiliary points}. Each one of the radial segments
incident to an auxiliary point is a {\em pre-edge}. The graph whose vertices are the vertices of $Pog(h)$
plus the auxiliary points and whose edges are the edges plus pre-edges of $Pog(h)$ is 
named a {\em pre-$Pog(h)$}.
Take a pre-$Pog(h)$ and embed it in the planar disk with center at the origin and
radius $h+1$, denoted $D$, of the usual 
plane so that the auxiliar points are in
the boundary of $D$. The antipodal points of $\partial D$ are identified, 
forming {\em real projective plane} $\mathbb{RP}^2$. In particular 
pairs of antipodal auxiliary points become a single bivalent vertex which 
is removed and the result is the graph $Pog(h)$ embedded into $\mathbb{RP}^2$. 
(see left side of Fig.\ref{Pog2e2emeio})
This completes the definition of $Pog(h)$, in the case of integer $h$. 

\noindent
{\bf Case $h$ is half integer.} If $h$ is a half integer then $Pog(h)$ 
has $\lfloor h \rfloor$ orbits each with $z=4h$ vertices and a degenerated orbit
corresponding to the extra $\frac{1}{2}$ and inducing a single central vertex. In the complex plane the $hz+1$ vertices of
$Pog(h)$ are $\{k \exp(2\pi i j/z): k=1,2,\ldots, \lfloor h \rfloor, \hspace{2mm} j=0,\ldots, 
z-1\} \cup \{0\}.$
The orbital and radial edges as well as the 
identifications are defined similarly as in the case $h$ integer.
The extra ingridient is that there are $z$ edges linking $0$ to the vertices in the 
innermost non-degenerated orbit (see right side of Fig.\ref{Pog2e2emeio}). 

The shapes of the $Pog_h$'s are taylored in such a way that it has $z$ {\em zigzag paths}:
such a path is exemplified in thick edges in Fig.\ref{Pog2e2emeio}). These paths alternates 
choosing the rightmost and leftmost edges at each vertex. Since $\mathbb{RP}^2$ is
non-orientable, in traversing an edge crossing the boundary of $D$ we must repeat 
the direction (left-left or right-right, 
instead of changing it). Note that a zigzag path is closed since it links
two antipodal auxiliary points in $D$ before they are identified in $\mathbb{RP}^2$.


\begin{figure}
\begin{center}
\includegraphics[width=16.5cm]{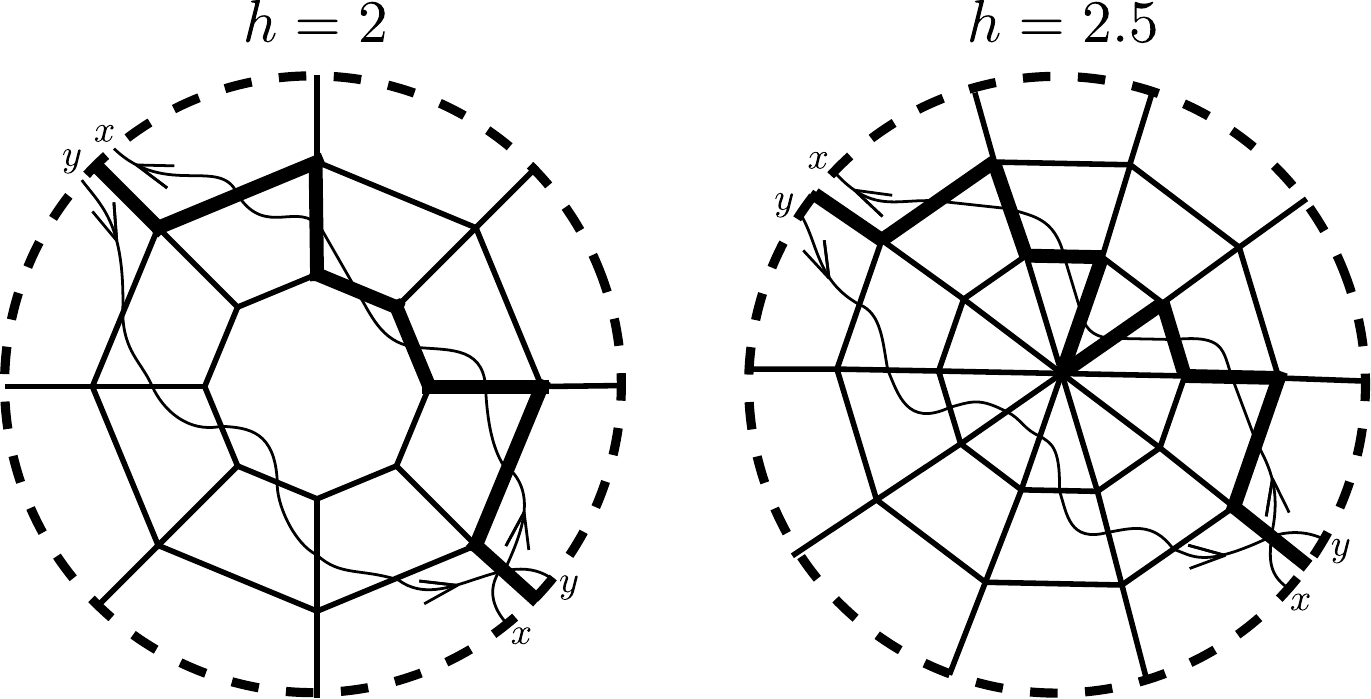}
\caption{\sf Small instances of $Pog_h$, $h=2$, integer and $h=2.5$, 
half integer: $Pog_2 \hookrightarrow 
\mathbb{RP}^2$ and $Pog_{2.5} \hookrightarrow \mathbb{RP}^2$. 
Cellular embeddings of the graphs into 
the {\em real projective plane} or a disk with antipodal 
identification, $\mathbb{RP}^2$. The two thick
closed paths are instances of {\em zigzag paths}. 
There is a total of $z=4h$ zigzag paths in $Pog_h$. 
Closely related to a zigzag path is a {\em closed straight line}, 
which is depicted as a thin line
which goes parallel to an edge 
crossing it at the middle and following close the second half of the edge,
turning at the angle to the next edge, where the process is repeated for all edges of the zigzag path
which are crossed once by the closed straight line. The graph induced by
the closed straight lines of a map is called the line embedding of the {\em phial map}. 
The {\em graph of this embedding} is the one whose vertices are the closed straight lines
of the map and whose edges are the intersection points of two of such lines (which 
may coincide). The line embeddings are in 1-1 correspondence with the usual cellular
embeddings which occur in another surface. This surface is determined, but not really
relevant here for our current purposes. As a crucial property, we have that the graphs
of the line embeddings induced by the $Pog_h$'s are the complete graphs $K_z$, with $z$ even. 
This is straightforward by the circular symmetry of 
these projective graphs: every pair of closed lines
cross exactly once. To obtain $Pog_h$ and its dual as labelled graphs
consistent with the labels of $G_3=K_z$ it is convenient to embed it 
into $S^{23}$. This can be done combinatorially 
by the {\em shaded rozigs}, see Fig. \ref{fig:rozig4}. 
}
\label{Pog2e2emeio}
\end{center}
\end{figure}

\begin{figure}
\begin{center}
\footnotesize{
  \begin{tabular}{|d?b|a|b|a|b|a|b?b|a|b|a|b|a|b|} \hline
  21 & 14 & 61 & 18 & A1 & 1C & E1 & 1G & 31 & 15 & 71 & 19 & B1 & 1D & F1 \\ \hline
  43 & 36 & 83 & 3A & C3 & 3E & G3 & 31 & 53 & 37 & 93 & 3B & D3 & 3F & 23 \\ \hline
  65 & 58 & A5 & 5C & E5 & 5G & 15 & 53 & 75 & 59 & B5 & 5D & F5 & 52 & 45 \\ \hline
  87 & 7A & C7 & 7E & G7 & 71 & 37 & 75 & 97 & 7B & D7 & 7F & 27 & 74 & 67 \\ \hline
  A9 & 9C & E9 & 9G & 19 & 93 & 59 & 97 & B9 & 9D & F9 & 92 & 49 & 96 & 89 \\ \hline
  CB & BE & GB & B1 & 3B & B5 & 7B & B9 & DB & BF & 2B & B4 & 6B & B8 & AB \\ \hline
  ED & DG & 1D & D3 & 5D & D7 & 9D & DB & FD & D2 & 4D & D6 & 8D & DA & CD \\ \hline
  GF & F1 & 3F & F5 & 7F & F9 & BF & FD & 2F & F4 & 6F & F8 & AF & FC & EF \\ \hline
  12 & 23 & 52 & 27 & 92 & 2B & D2 & 2F & 42 & 26 & 82 & 2A & C2 & 2E & G2 \\ \hline
  34 & 45 & 74 & 49 & B4 & 4D & F4 & 42 & 64 & 48 & A4 & 4C & E4 & 4G & 14 \\ \hline
  56 & 67 & 96 & 6B & D6 & 6F & 26 & 64 & 86 & 6A & C6 & 6E & G6 & 61 & 36 \\ \hline
  78 & 89 & B8 & 8D & F8 & 82 & 48 & 86 & A8 & 8C & E8 & 8G & 18 & 83 & 58 \\ \hline
  9A & AB & DA & AF & 2A & A4 & 6A & A8 & CA & AE & GA & A1 & 3A & A5 & 7A \\ \hline
  BC & CD & FC & C2 & 4C & C6 & 8C & CA & EC & CG & 1C & C3 & 5C & C7 & 9C \\ \hline
  DE & EF & 2E & E4 & 6E & E8 & AE & EC & GE & E1 & 3E & E5 & 7E & E9 & BE \\ \hline
  FG & G2 & 4G & G6 & 8G & GA & CG & GE & 1G & G3 & 5G & G7 & 9G & GB & DG \\ \hline
  \end{tabular}
  }
\includegraphics[width=14.cm]{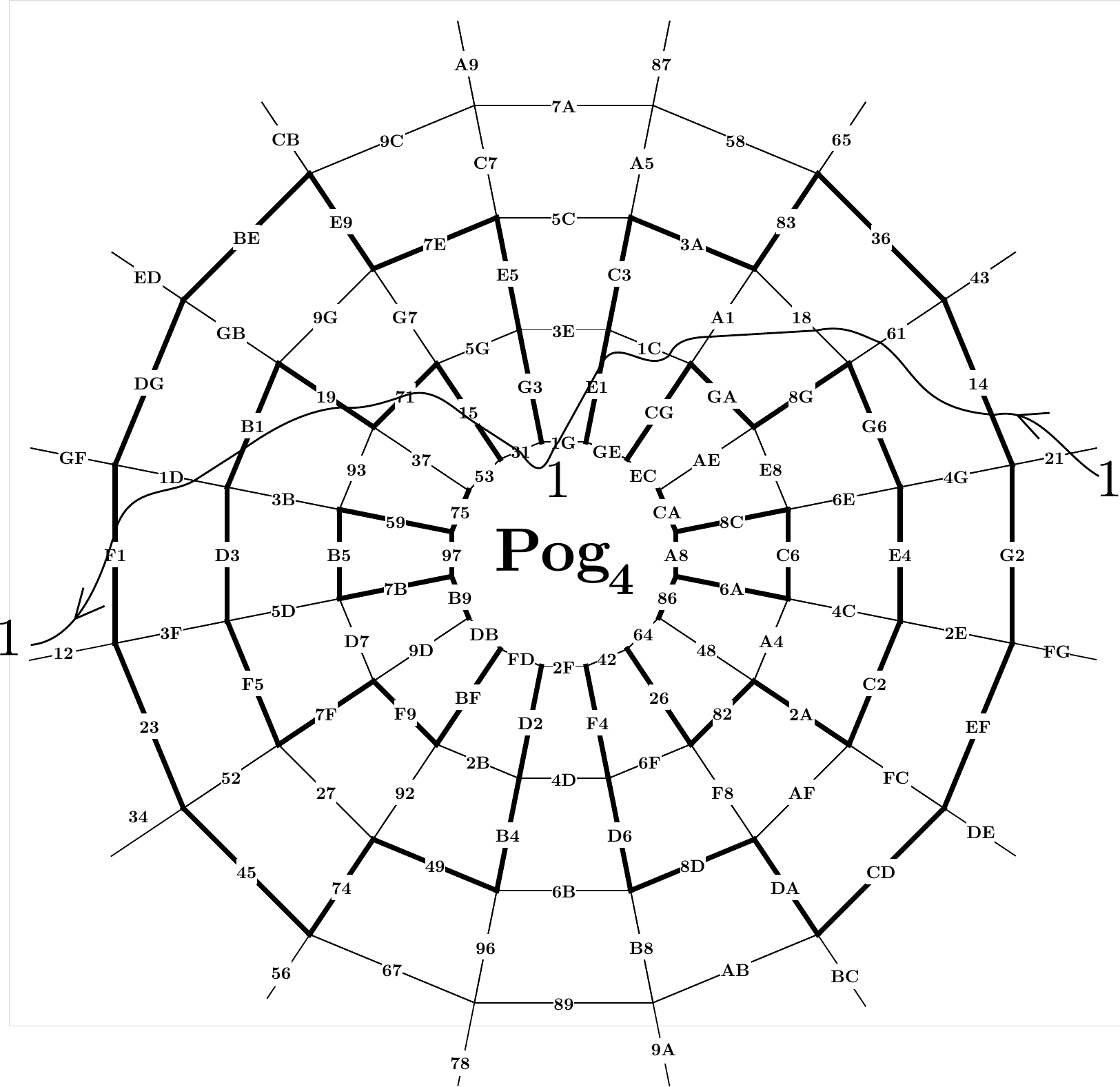} \\
\caption{\sf Example of labelled shaded rooted oriented zigzags or {\em labelled shaded rozigs} 
$G_1=Pog_4$.  We also display a strong $O$-join, denoted by $T$, 
given in thick edges: the parity of the number of edges of $T$ in the 
coboundary (boundary) 
of a vertex (a face) and 
the parity of the degree of the vertex (the face) of $G_1=Pog_4$ 
coincide. The labels of the vertices of 
$K_{16}$ are the digits in $1,2\ldots,9,A,B,C,D,E,F,G$ (base 17). An  
edge of $G_3=K_z$ is labelled by an unordered 
pair of vertices. Note that the face boundaries in clockwise order 
and the vertices coboundaries in counter-clockwise order correspond
to directed polygons in $G_3=K_z$. Rozig 1 is displayed.}
\label{fig:rozig4}
\end{center}
\end{figure}

\section{Combinatorially Constructed Labelled $Pog_h$}
By using a combinatorial construction for $Pog_h$ we get the triad of graphs $G_1=Pog_h$ its dual
in $\mathbb{RP}^2$, $G_2$ and its phial $G_3=K_z$. The construction is based on a table named
{\em shaded rozigs} which amounts to an embedding of $K_z$ into some higher genus surface. We refer to
Fig. \ref{fig:rozig4}. 

The rozig table has $z$ rows and $z-1$ columns. 
Each entry of the table is an ordered distinct 
pair of labels in $\{1,2,\ldots,z$ and each such pair appears 
twice (maybe with the symbols switched). 
These symbols label the vertices of the complete graph and the pair is an oriented form
an edge of $G_3=K_z$.
The filling of the table depends on a simple function
$suc2 \{1,2,\ldots z\} \times \{z\} \longrightarrow \{1,2,\ldots z\}\times \{z\} $,
where $suc2(\ell,z)=\ell+2$, if $\ell \le z-2$, $suc2(z,z)=1$, $suc2(z-1,z)=2$. 

The rozig table 
has 3 types of columns: the {\em projective column}, formed by the 0-column,
the {\em left columns}, formed by columns 1 to $z/2$ and the 
{\em right columns} formed by columns $z/2+1$ to $z-1$. 

\vspace{3mm}
\noindent
{\bf Defining the first row of the rozig table.}
The entries in the first row start with (2,1) in the projective column, followed by
$$(1,4), (6,1), \ldots, (z,1)
{\rm\ or\ by\ } 
(1,4), (6,1), \ldots, (1,z),$$ 
according to $z\equiv 2 \mod 4$ or $z\equiv 0 \mod 4$
filling the left columns. Finally we have,  if $z\equiv 2 \mod 4$,
$$(1,3), (5,1), \ldots, (z-1,1) 
{\rm\ or\ by\ } 
(3,1), (1,5), \ldots, (1,z-1),$$
filling the right columns. This completes the filling of the first row of the rozig table.
This row corresponds to the cyclic order of the oriented edges of the coboundary of
vertex 1 of $G_3=K_z$. It corresponds also to a rooted oriented zigzag (rozig) path 
labelled 1 in $G_1=Pog_h, z=4h$. See Fig. \ref{fig:rozig4}.

\vspace{3mm}
\noindent
{\bf Defining the other rows of the rozig table.}
To get row $i+1$ from row $i$ in the rozig table just apply $suc2$ to the individual symbols
of the pairs. This completes the definition of rozig table. 
From its rows we get a {\em rotation} for $K_z$,
namely a cyclic ordering for the edges incident to each vertex $i$ of $K_z$. 

\vspace{3mm}
\noindent
{\bf Yet another combinatorial counterpart for graphs embedded into surfaces.}
To obtain a combinatorial counterpart for an embedding of a graph we need a rotation
(which we have: the rows of the rozig) together with the corresponding {\em twist} 
which is the subset of edges that are twisted
for the fixed rotation. In our case, the twisted edges 
are the ones which correspond to the radial edges
of $Pog_h$. The non-twisted ones correspond to the orbital edges of $Pog_h$. In terms of rozigs,
a twisted edge is one traversed in opposite directions by the two zigzags that traverse the edge.
The pair {\bf (rotation,twist)} is sufficient to describe the embedding because 
from it we can recover the entire $Q$-graph: 
given an immersion respecting the rotation of $Q$ 
(with crossings between the 1-colored edges) in the plane, given a twisted 
edge $e$ the pair of edges of color 2 in the hyperedge of $Q$ corresponding to $e$ is 
replaced by the crossing edges.

\vspace{3mm}
\noindent{\bf The relevance of the shading.}
All the edges in a column of the rozig table are radial or all are orbital. We can shade the 
columns so that an edge is twisted in the rotation iff it is in a shaded column. In this way,  
shading defines the twist of the map and complements the rozigs completing its 
combinatorial presentation.

\vspace{3mm}
\noindent
{\bf Defining the shading}. 
The projective column is shaded,  
the left columns alternate (non-shaded, shaded) starting with non-shaded. The right columns
are shaded or not according to the reflexion of the left columns in the vertical line
separating the left and right columns. See Fig. \ref{fig:rozig4}.

\section{Linear Models for MinStrongOjoin and MaxCut}
Suppose that $G_1=Pog_h$ and $G_2)$ are duals in $\mathbb{RP}^2$ and 
$G_3=K_z$ $(z=4h)$ embedded in some surface as the phial of $G_1$. 
The common set of edges is denoted $E$. In order to prove that
$G_3$ is rich is enough to prove that $G_1$ is rich. We have that 
$\dim (\mathcal{V}_1^\perp/\mathcal{F}_1)=(|E|-v+1)-(f-1)=|E|-v-f+2=-\chi+2=1$, 
since $\chi(\mathbb{RP}^2)=1$. Any zigzag in $\mathcal{Z}_1$ can be adjoined to 
$\mathcal{F}_1$ to generate the cycle space of $G_1$. Note that each zigzag is an
orientation reversing polygon, so it is not in the span of the boundaries of the faces.
Thus $G_1$ is rich, whence $G_3$ is rich.

\vspace{3mm}
\noindent
{\bf Triangles and quadrangles in $V_{12}$ spanning the cycle space of $K_z$.}
Denote by $V_{12}$ the set of polygons $p$ of length 3 and 4 of 
graph $G_3=K_z$ which corresponds to 
the coboundary of the vertices of $G_1$ and $G_2$. We have 
$\langle V_{12} \rangle$=$\mathcal{V}_3^\perp$, because at most one polygon 
(correponding to the central face if $z\equiv 0 \mod 4$ or the central vertex if
$z \equiv 2 \mod 4)$ has number of  sides distinct from 3 and 4. Note that this 
polygon is equal to the sum of all the other polygons (3- and 4-gons) 
in the same $G_i$.

We can now define the first of our polytopal models. It has a variable $x_e' \in \mathbb{R}^{|E|}$ 
for each $e \in E$ and a variable $s_p \in \mathbb{R}^{|V_{12}|}$  for each $p \in V_{12}$.

\begin{center}
 $
 P_0' = \left\{
\begin{array} {ll}
p\in V_{12}: & 2 s_p + \sum\{x_e':e \subseteq p\} = |p|\\
\rm {bounds:\ } & 0 \le x_e' \le 1\ \forall e' \in E, s_p \ge 0, \forall p \in V_{12}.
\end{array}
\right.
$
\end{center}

\begin{proposition}
 $P_0'$ is a linear model for the MinStrongOJoin problem.
\end{proposition}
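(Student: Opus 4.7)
The plan is to identify the integer points $(x',s)$ of $P_0'$, projected onto the $x'$-coordinates, with the characteristic vectors of the strong $O$-joins of $G_1$, under which the linear objective $\sum_e x_e'$ equals the cardinality $|F|$; minimizing it over the integer points of $P_0'$ will then solve MinStrongOJoin. The first step is to restate the definition of a strong $O$-join as a parity condition indexed by $V_{12}$: by the characterization recalled above, $F$ is a strong $O$-join iff $\overline{F}\in\mathcal{V}_1^\perp\cap\mathcal{V}_2^\perp$, and since the polygons of $V_{12}$ generate $\mathcal{V}_1+\mathcal{V}_2$ (the possibly missing central polygon being, as noted in the text, the $\bmod\,2$ sum of the others), this membership is equivalent to $|F\cap p|\equiv|p|\pmod{2}$ for every $p\in V_{12}$.

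Next I would verify both directions of the correspondence between integer points of $P_0'$ and strong $O$-joins. Given any integer point $(x',s)\in P_0'$, the bounds force $x_e'\in\{0,1\}$ and $s_p\in\mathbb{Z}_{\ge 0}$; the defining equation $2s_p+\sum_{e\subseteq p}x_e'=|p|$ then immediately yields the parity condition on $F=\{e:x_e'=1\}$, so $F$ is a strong $O$-join. Conversely, for any strong $O$-join $F$, the choice $x'=\chi_F$ together with $s_p=(|p|-|F\cap p|)/2$ defines a non-negative integer slack (integer by the parity condition, non-negative because $|F\cap p|\le|p|$) that satisfies every defining equation, yielding a valid integer point of $P_0'$ whose edge coordinates are $\chi_F$.

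The main subtlety I expect is this reduction of the parity system to the generators in $V_{12}$: a priori one might need an additional constraint coming from the central vertex or central face polygon, but that polygon is already the $\bmod\,2$ sum of polygons in $V_{12}$, so its parity condition is implied and the equations of $P_0'$ suffice. Beyond that, the argument is a routine unfolding of the definitions together with the elementary observation that the slack $s_p$ in $2s_p+\sum_{e\subseteq p}x_e'=|p|$ is a non-negative integer exactly when $|F\cap p|$ has the correct parity; the identity $\sum_e x_e'=|F|$ then finishes the proof that $P_0'$ is indeed a linear model for MinStrongOJoin.
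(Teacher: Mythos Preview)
Your proposal is correct and follows the same two-direction argument as the paper's proof: show that characteristic vectors of strong $O$-joins extend to integer points of $P_0'$, and conversely that any integer point of $P_0'$ projects to such a characteristic vector. The paper's own proof is essentially a two-sentence assertion of these directions; you supply the details it leaves implicit, namely the reduction of the strong $O$-join condition to the parity constraints indexed by the generating set $V_{12}$, the explicit formula $s_p=(|p|-|F\cap p|)/2$ for the slack, and the observation that the omitted central polygon constraint is redundant because that polygon is the $\bmod\,2$ sum of the others.
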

\begin{proof}
 Any characteristic vector of a strong $O$-join satisfies 
 the linear restrictions of $P_0'$.
 Reciprocally, if $(x_e',s_P)$ is all integer and satisfy 
 these restrictions it is the characteristic vector of a strong $O$-join. 
 \end{proof}
  
  \vspace{3mm}
  \noindent
 {\bf Double slack variables.} Observe that each $s_p$ appears once with coefficient 2. 
 Therefore $\frac{s}{2}$ is 
 a slack variable and $s$ is called a {\em double slack variable}.
 
 \vspace{3mm}
 \noindent
 {\bf Valid inequalities.}
 A {\em valid inequality} for a polytope is one which does not remove any 
 of its points with all integer coordinates. It is straighforward to show that 
 a linear model for a combinatorial problem remains so if we add valid inequalities.
 A class of valid inequalities will be added to $P_0'$ which permits the 
 elimination of the double slack variables $s_p$ and of 
 the unitary upper bounds $x_e' \le 1$.
 
 \begin{figure}
\begin{center}
\label{fig:vtValIneqs}
\includegraphics[width=12cm]{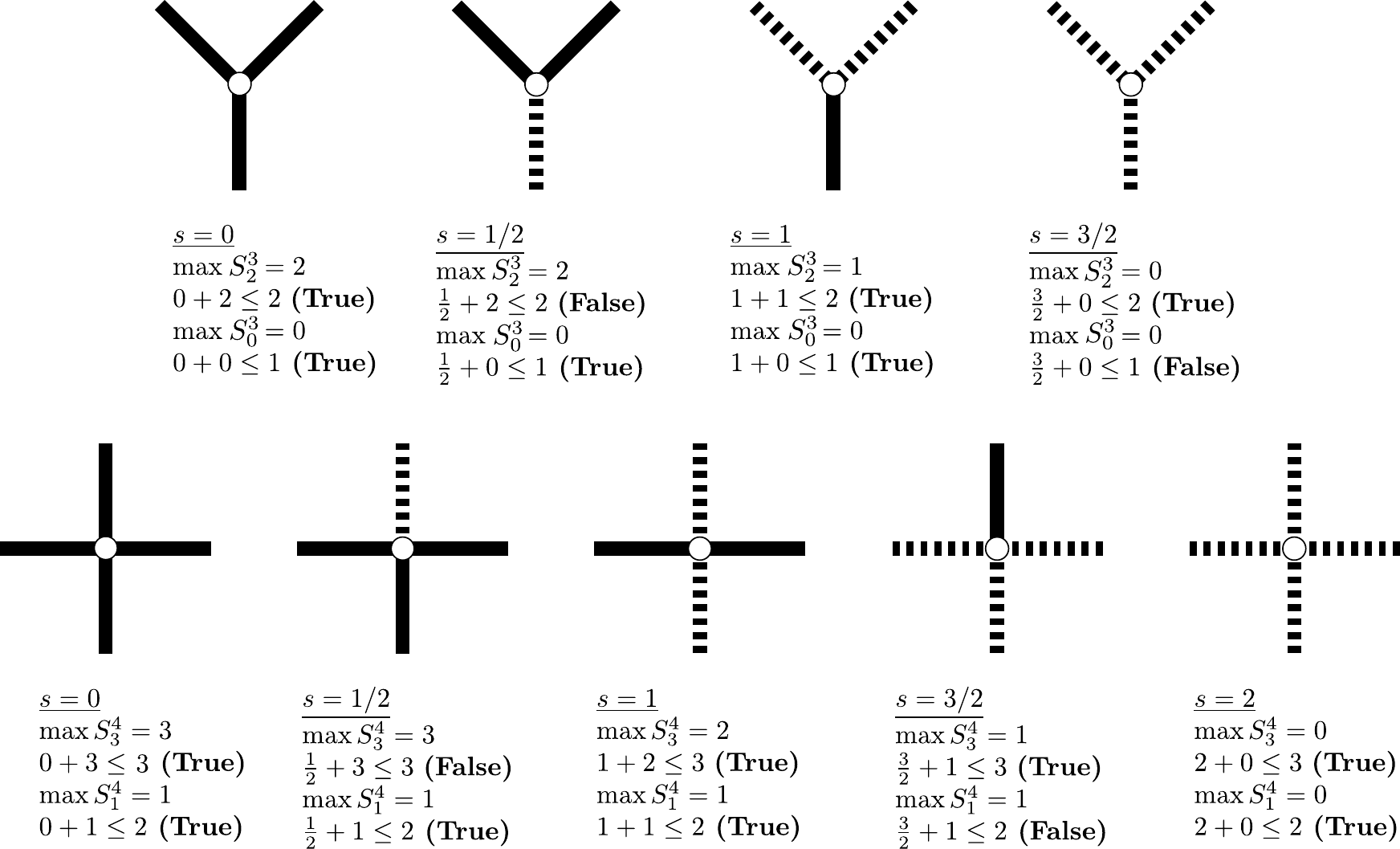} \\
\caption{\sf Valid/violating $pq$-inequalities: $s+\max S^p_q \leq \frac{|p|+|q|-1}{2}$, 
with $|p|$ and $|q|$ of distinct parities where $q \subset p$.  We impose $x'_e \in \{0,1\}$.
The condition characterizing a strong $O$-join can be localized to 
a neighborhood of a generic $3$-vertex in $V_{12}$.
The thick full edges are in $q$ and their set constitutes a 
strong $O$-join. The coboundary of $v$ is $p$, which is a polygon in $G_3$. 
The dashed edges are the ones in $p\backslash q$.
For each half integer $s$ one of its $pq$-inequalities
is violated. If $s$ is integer all its
induced $pq$-inequalites are valid. Thus, integer $x'_e$, $e \in E$, plus 
$pq$-inequalities imply integer double slackness variables $s_p$, $p \in V_{12}$.} 
\label{fig:thetas}
\end{center}
\end{figure}

 Let $p \in V_{12}$ and $q \subset p$ so that $|p|+|q|$ is odd. 
 The {\em $pq$-inequality} is 
 $$ s_p+ \sum\{x_e: e \subset q \subset p\} \le \frac{|p|+|q|-1}{2}.$$ 
 The following theorem is central in this work.
 \begin{theorem}
  The $pq$-inequalities eliminate fractional double slack variables 
  $s_p$ in the sense that after including them, 
  integer $x_e', e' \in E$  imply integer $s_p, p \in V_{12}$. 
 \end{theorem}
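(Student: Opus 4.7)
The plan is to use the equality constraint attached to each $p \in V_{12}$ to eliminate $s_p$ algebraically from the $pq$-inequality and obtain a purely $x'$-inequality whose violation is easy to characterize combinatorially. Then I will argue by contrapositive: if $s_p$ is half-integer and all $x'_e$ are in $\{0,1\}$, there is a canonical choice of $q \subset p$ (with $|p|+|q|$ odd) that violates the corresponding $pq$-inequality.

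First I rewrite. The model equality at $p$ gives $2s_p = |p| - \sum_{e \subseteq p} x'_e$. Substituting into the $pq$-inequality and multiplying by $2$ transforms it into
\[
\sum_{e \subseteq q} x'_e \;-\; \sum_{e \subseteq p \setminus q} x'_e \;\le\; |q|-1.
\]
Since each $x'_e$ lies in $[0,1]$, the left-hand side is at most $|q|$, and equality holds exactly when $x'_e = 1$ for every $e \subseteq q$ and $x'_e = 0$ for every $e \subseteq p \setminus q$. Hence, among $(0,1)$-assignments on the edges of $p$, the $pq$-inequality is violated if and only if $q$ coincides with the set $Q_p := \{e \subseteq p : x'_e = 1\}$.

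Now suppose $(x'_e)_{e \in E}$ is integer but $s_p$ is a half-integer for some $p \in V_{12}$. Then $2s_p = |p| - \sum_{e \subseteq p} x'_e$ is odd, so $|p|$ and $|Q_p|$ have opposite parities, i.e.\ $|p| + |Q_p|$ is odd. Setting $q := Q_p$ therefore meets the hypothesis on parities that defines a $pq$-inequality, and by the characterization above this particular inequality is violated. Contrapositively, if every $pq$-inequality for $p$ is satisfied, no such $Q_p$ can realize the violation, forcing $|p| + \sum_{e \subseteq p} x'_e$ to be even and thus $s_p$ to be a nonnegative integer.

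The only subtlety I anticipate is bookkeeping: one must check that the choice $q := Q_p$ is indeed admissible as a subset of $p$ (it is, by definition) and that $|p|+|q|$ is odd (handled by the parity computation above). The boundary cases $|Q_p| = 0$ and $|Q_p| = |p|$ also need to be covered, but both are allowed since $q \subseteq p$ is the only restriction in the definition of a $pq$-inequality, and in both extremes the rewritten inequality is $-\sum_{e \subseteq p} x'_e \le -1$ or $\sum_{e \subseteq p} x'_e \le |p|-1$, respectively, which fail precisely under the extremal assignments that would make $2s_p$ equal to $|p|$ or $-|p|$ with the wrong parity. So the argument is uniform and I expect no hidden obstacle beyond this case check.
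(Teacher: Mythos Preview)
Your argument is correct and matches the paper's: both observe that when the $x'_e$ are $0$--$1$ and $2s_p=|p|-\sum_{e\subseteq p}x'_e$ is odd, the choice $q=\{e\subseteq p:x'_e=1\}$ has the required parity and violates its $pq$-inequality. The paper presents this as a pictorial case check (deferring to a figure), whereas you carry out the same elimination algebraically---an elimination the paper in fact performs immediately after the theorem for the passage from $P'_1$ to $P'_2$.
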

\begin{proof}
 Let $S^p_q = \sum\{x_e': e \subset q \subset p\}$. 
The analysis for (0-1)-integers $x_e'$ given in Fig. \ref{fig:vtValIneqs} shows 
that the vertices with a fractional
$s_p$ are precisely the ones that violate some restriction. 
The neighborhood of a vertex in $V_{12}$. 
The thick edges have $x_e'=1$ and the dashed
edges have $x_e'=0$. The whole coboundary of the vertex is the edge set of a polygon $p \in G_3$.
\end{proof}

By simply adding the $pq$-inequalities provides another linear 
model for the MinStrongOjoin problem:

\begin{center}
 $
 P_1' = \left\{
\begin{array} {ll}
p\in V_{12}: & 2 s_p + \sum\{x_e':e \subseteq p\} = |p|\\
q \subset p\in V_{12}: &  s_p + \sum\{x_e':e \subset q \subset p\} \le \frac{|p|+|q|-1}{2}\\
\rm {bounds:\ } & 0 \le x_e' \le 1, s_p \ge 0.
\end{array}
\right.
$
\end{center}
 
Since integrality of $x_e'$ imply integrality of the $s_p$ and each of these appears once
in an equation, we can dispose of these double slackness variables 
variables by considering its implicit definition,
$$ s_p (integer) := \frac{|p| - \sum\{x_e':e \subseteq p\}}{2} {\rm \ \ we \ obtain}, $$
\begin{center}
 $
 P_2' = \left\{
\begin{array} {ll}
q \subset p\in V_{12}, |p|-|q| {\rm \ odd\ }: &  |p| - \sum\{x_e':e \subseteq p\}
+ \sum\{2x_e':e \subset q \subset p\} \le |p|+|q|-1\\
\rm {bounds:\ } & 0 \le x_e' \le 1 \hspace{3mm} \forall e \in E.
\end{array}
\right.
$
\end{center}

Consider
$q \subset p\in V_{12}$, $|p|-|q|$ odd:  
$$|p| - \sum\{x_e': e \subseteq p\}
+ \sum\{2x_e':e \subset q \subset p\} \le |p|+|q|-1$$
$$ \Longleftrightarrow |p| - \sum\{x_e':e \subset p\backslash q\} 
- \sum\{x_e':e \subseteq q \subset p\}
+ \sum\{2x_e':e \subset q \subset p\} \le |p|+|q|-1$$
$$ \Longleftrightarrow \sum\{x_e':e \subseteq q \subset p\} - \sum\{x_e':e \subset p\backslash q\} 
\le |q|-1.$$

 \begin{theorem}
 The polytope\\
 \begin{center}
 $
 P_2' = \left\{
\begin{array} {ll}
q \subset p\in V_{12}, |p| - |q| {\rm \ odd} : &  \sum\{x_e':e \subset q \subset p\}
- \sum\{x_e':e \subset p\backslash q\} \le |q|-1\\
\rm {bounds:\ } & 0 \le x_e' \le 1 \hspace{3mm} \forall e \in E.
\end{array}
\right.
$
\end{center}
is a linear model for the MinStrongOjoin problem.
\end{theorem}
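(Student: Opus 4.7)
The plan is to reduce the claim to the previously established fact that $P_1'$ is a linear model for MinStrongOjoin. Given an integer $(0,1)$-vector $x'$ satisfying the $P_2'$ inequalities, I will define $s_p := (|p| - \sum_{e \subseteq p} x_e')/2$ for each $p \in V_{12}$ and show that $(x',s)$ is an integer feasible point of $P_1'$. Since the algebraic manipulation exhibited just before the statement turns each $P_1'$ $pq$-inequality into the $P_2'$ $pq$-inequality under exactly this substitution, the only things left to verify are that each $s_p$ thus defined is a non-negative integer, and, conversely, that the characteristic vector of every strong $O$-join lies in $P_2'$.

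Non-negativity of $s_p$ is immediate from the box constraint $x_e' \leq 1$, which gives $\sum_{e \subseteq p} x_e' \leq |p|$. Integrality of $s_p$ is the essential point: it amounts to the parity condition $\sum_{e \subseteq p} x_e' \equiv |p| \pmod 2$, i.e. to $F := \{e : x_e' = 1\}$ inducing the correct parity at $p$. Assume, for contradiction, that $k := |F \cap p|$ and $|p|$ have opposite parities. If $k \geq 1$, take $q := F \cap p$; since $|q| = k$ has the parity of $k$, opposite to that of $|p|$, the pair $(p,q)$ satisfies $|p|-|q|$ odd, and the corresponding $P_2'$ inequality specializes to $k - 0 \leq k - 1$, which is false. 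If $k = 0$, then $|p|$ must be odd, and $q := \emptyset$ gives $|p| - |q|$ odd with the inequality reducing to $0 \leq -1$, again false. Hence the parity condition holds for every $p \in V_{12}$, and $s_p$ is a non-negative integer, so $(x',s)$ is integer-feasible in $P_1'$.

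For the converse, let $x'$ be the characteristic vector of a strong $O$-join $F$. Fix $p \in V_{12}$ and $q \subset p$ with $|p|-|q|$ odd; set $k := |F \cap p|$, so that $k \equiv |p| \pmod 2$ and in particular $|q| \neq k$. From $|q \cap F| \leq \min(|q|, k)$, a short case split on $|q| < k$ versus $|q| > k$ yields $2|q \cap F| - k \leq |q| - 1$, which is exactly the $P_2'$ inequality $\sum_{e \subseteq q} x_e' - \sum_{e \subseteq p \setminus q} x_e' \leq |q| - 1$. Thus every strong $O$-join lies in $P_2'$, establishing the bijection between integer points of $P_2'$ and characteristic vectors of strong $O$-joins.

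The main obstacle is the parity-forcing step in the first direction: since $P_2'$ has no $s_p$ and no defining equation $2s_p + \sum x_e' = |p|$, the parity constraint must be recovered purely from the geometric shape of the remaining inequalities, by exhibiting a witness $q$ whenever the parity fails. The two witness choices $q = F \cap p$ (for $k \geq 1$) and $q = \emptyset$ (for $k = 0$) together cover all failure modes; the remainder of the proof is algebraic bookkeeping built on the substitution $s_p = (|p| - \sum_{e \subseteq p} x_e')/2$ and the equivalence already derived in the text.
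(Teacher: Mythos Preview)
Your proof is correct and follows essentially the same route as the paper, which simply says ``straightforward from the equivalences above'' and defers the parity-forcing step to the earlier theorem on $pq$-inequalities eliminating fractional $s_p$. What you have done is unpack that earlier theorem inline: your witness choice $q = F\cap p$ (with the degenerate case $q=\emptyset$) is exactly the case analysis sketched there via Fig.~\ref{fig:thetas}, and your converse direction makes explicit the bookkeeping the paper leaves implicit in the substitution $s_p = (|p|-\sum_{e\subseteq p} x_e')/2$.
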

\begin{proof}
 It is straightforward from the equivalences above.
\end{proof}

We want to get a linear model for the MaxCut problem. Given $P_2'$ it is enough to replace each 
variable $x_e'$ by $x_e=1-x_e'$. This has the effect of complementing the 
characteristic vectors and the minimization problem becomes a maximization one. We get\\
$$ \sum\{x_e':e \subseteq q \subset p\} - \sum\{x_e':e \subset p\backslash q\} 
\le |q|-1$$
$$ \Longleftrightarrow \sum\{1-x_e: e \subseteq q \subset p\} - \sum\{1-x_e:e \subset p\backslash q\} 
\le |q|-1$$
$$ \Longleftrightarrow |q|-\sum\{x_e: e \subseteq q \subset p\} - 
(|p|-|q|) + \sum\{x_e:e \subset p\backslash q\} 
\le |q|-1$$
$$ \Longleftrightarrow  \sum\{x_e:e \subset p\backslash q\}  
-\sum\{x_e: e \subseteq q \subset p\} 
\le |p|-|q|-1$$
$$ \Longleftrightarrow  \sum\{x_e:e \subset p\backslash q\}  
-\sum\{x_e: e \subseteq q \subset p\} 
\le |p| - p^- -1$$
$$ \Longleftrightarrow  \sum\{x_e:e \subset p\backslash q\}  
-\sum\{x_e: e \subseteq q \subset p\} 
\le p^+ -1.$$

\vspace{3mm}
\noindent
{\bf Sign of an edge in a polygon.}
Edges in
$p\backslash q$ have sign +1 and edges in $q$ have sign -1. 
Let $\sigma^e_p$ be the sign of edge $e$ in polygon $p$. 
Let $p^+$ and $p^-$ denote respectively the number of $+1$ signs and $-1$ signs on
the edge variables of the polygon $p$.
Note that $|p|-|q|$ odd $ \Rightarrow $  $|p|-p^-$ odd $\Rightarrow p^+$ odd.
Then the last equivalence can be rewritten as
$$ \sum\{\sigma^e_p x_e:e \subset p\} \le p^+ -1.$$ 
Let $S^\pm_{12}$ denote the polygons in $V_{12}$ 
arbitrarily signed except for the fact that
$p^+$ is odd. Note that we have disposed the $q$'s by using signed polygons. 
In the Theorem below the linear restrictions forming the polytope 
are induced by signed forms of
the coboundaries of the vertices and the signed forms of the boundary of the faces of map
$G_1=Pog_h \hookrightarrow \mathbb{RP}^2$. The phial graph of $G_1$ is $G_3=K_z, z=4h$,
embedded into some higher genus surface $S^{23}$, which does not concern us except 
for the practical fact that $G_3=K_z \hookrightarrow S^{23}$ via shaded rozigs 
is the easier way to obtain combinatorially the graphs 
$G_1=Pog_h$ and its dual $G_2$ in $\mathbb{RP}^2$ so that
$G_1, G_2,G_3$ have the same edge set $E$.

 \begin{theorem}
 \label{theo:modelMaCut}
  The polytope\\
 \begin{center}
 $
\mathbb{P}_{12} = \left\{
\begin{array} {ll}
p\in S^\pm_{12}: &  \sum\{\sigma^e_p x_e:e \subset p\} \le p^+ -1\\
\rm {bounds:\ } & x_e \ge 0 \hspace{3mm} \forall e \in E.
\end{array}
\right.
$
\end{center}
is a linear model for the MaxCut problem on the complete graph $K_z, z$ even. 
\end{theorem}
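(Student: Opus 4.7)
The plan is to recognize the theorem as the final link of the chain of equivalences $P_0' \Longleftrightarrow P_1' \Longleftrightarrow P_2' \Longleftrightarrow \mathbb{P}_{12}$ that has been explicitly laid out in the preceding paragraphs, and then to invoke the Reformulation of MaxCut theorem to pass from a MinStrongOJoin model on $G_1=Pog_h$ to a MaxCut model on $G_3=K_z$. So my proof has to check two things: (i) the variable change $x_e = 1-x_e'$ together with the sign convention $\sigma^e_p=+1$ on $p\setminus q$ and $\sigma^e_p=-1$ on $q$ turns the $P_2'$ system into the $\mathbb{P}_{12}$ system; and (ii) the resulting $(0,1)$-integer points of $\mathbb{P}_{12}$ coincide with the characteristic vectors of the bipartite subgraphs of $K_z$, with $\sum_e x_e$ counting the edges in the cut.

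For (i), I would simply run the bookkeeping already displayed just above the theorem statement: the substitution turns each $P_2'$-inequality into
$$\sum_{e\subset p\setminus q} x_e \;-\; \sum_{e\subseteq q} x_e \;\le\; |p|-|q|-1,$$
which, since $p^+ = |p|-|q|$ and $p^- = |q|$ under the sign convention, is exactly $\sum_{e\subset p}\sigma^e_p x_e \le p^+-1$. The map $q \mapsto \{e : \sigma^e_p = -1\}$ sets up a bijection between pairs $(p,q)$ with $q \subset p \in V_{12}$ and $|p|-|q|$ odd, and the signed polygons in $S^\pm_{12}$ with $p^+$ odd; no inequality is lost or repeated (in particular $q=\emptyset$ is admissible precisely when $|p|$ is odd, i.e. for the triangles in $V_{12}$, whereas $q=p$ is automatically excluded since it yields the even value $p^+=0$). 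The bounds $0 \le x'_e \le 1$ become $0 \le x_e \le 1$ under the substitution, and the upper bound may be discarded in $\mathbb{P}_{12}$ because, as spelled out in the abstract, a linear model is defined by the correspondence at the $(0,1)$-integer points only, each of which trivially satisfies $x_e \le 1$.

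For (ii), I would combine the already-proven fact that $P_2'$ is a linear model for MinStrongOJoin with the Reformulation of MaxCut theorem. The former gives a bijection between the $(0,1)$-integer points of $P_2'$ and the characteristic vectors of strong $O$-joins of $G_1$; complementation carries this over to a bijection between the $(0,1)$-integer points of $\mathbb{P}_{12}$ and the complements of such strong $O$-joins. The latter theorem, which applies because $G_3 = K_z$ is rich (the richness of $G_1 = Pog_h \hookrightarrow \mathbb{RP}^2$ was verified at the start of this section via $\chi(\mathbb{RP}^2)=1$ and the orientation-reversing zigzags), identifies the complement of a minimum strong $O$-join of $G_1$ with a maximum coboundary of $G_3$, i.e. the edge set of a maximum bipartite subgraph of $K_z$. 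Consequently the optimum of $\max \sum_e x_e$ over the $(0,1)$-integer points of $\mathbb{P}_{12}$ is exactly the MaxCut value of $K_z$.

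The main obstacle is not depth but bookkeeping in step (i): one must track carefully that the parity condition $|p|-|q|$ odd translates to oddness of $p^+$ (not $p^-$), that the bijection $q \leftrightarrow \{e:\sigma^e_p = -1\}$ neither drops nor duplicates any inequality when $|p|\in\{3,4\}$, and that discarding the upper bound $x_e\le 1$ is legitimate under the model definition. Once this is verified the theorem is immediate from the two earlier results already in hand.
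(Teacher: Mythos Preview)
Your chain-of-equivalences argument for part (i) and the bijection in part (ii) are both sound and match what the paper does implicitly (``straightforward from the equivalences above''). One small clarification: in (ii) you invoke the Reformulation of MaxCut theorem, which is stated only at the level of optima, whereas what you actually need is the set-level bijection \{complements of strong $O$-joins of $G_1$\} $=$ \{coboundaries of $G_3$\}. That bijection is exactly the identity $\mathcal V_1^{\perp}\cap\mathcal V_2^{\perp}=\mathcal V_3$, which is the content of richness; you cite the right ingredient, just not under the right name.

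The one place you genuinely diverge from the paper is the treatment of the upper bounds $x_e\le 1$. You discard them on the grounds that the abstract's definition of a linear model only concerns $(0,1)$-valued points, for which $x_e\le 1$ is tautological. That is formally correct under that definition. The paper, however, takes a stronger line: it \emph{derives} $x_e\le 1$ from the $\mathbb P_{12}$ inequalities themselves. For an edge $ij$ lying in a $3$-unifier one adds the two signed inequalities on the triangle $\{ij,jk,ki\}$ with $p^+=3$ and $p^+=1$ to obtain $2x_{ij}\le 2$; for a $4$-unifier one pairs off the four odd-$p^+$ inequalities on $\{ij,jk,kl,li\}$ to get $x_{ij}-x_{li}\le 1$ and $x_{ij}+x_{li}\le 2$, whence $2x_{ij}\le 3$ and then $x_{ij}\le 1$ by integrality. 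This shows that \emph{every} integer point of $\mathbb P_{12}$ (not merely every $(0,1)$-point) lies in $[0,1]^E$, consistent with the ``all integer'' usage in the proof of the earlier Proposition on $P_0'$. Your shortcut loses this conclusion; if the intended notion of model is the stronger one, your argument has a gap precisely here.
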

\begin{proof}
 It is straightforward from the equivalences above, except for the unitary upper bounds.
 Given any $ij \in E$ there is in $G_1$ in a coboundary of a 
vertex or the boundary of a face of degree 3 or 4 containing $ij$.
The variables correspond to unoriented edges. So we have 
$x_{ij}=x_{ji}$ for every pair of distinct vertices of $G_3=K_z$.

\vspace{3mm}
\noindent
{\bf Case 3.}
In the first case there is a $k$ so that 
$x_{ij} + x_{jk} + x_{ki} \le 2$ and $x_{ij}-x_{jk}-x_{ki} \le 0$.
Adding these, $2x_{ij} \le 2$ or $x_{ij}\leq 1$.

\vspace{3mm}
\noindent
{\bf Case 4.}
If $ij$ is in the coboundary of a vertex or in the boundary of a face of degree 4, there
are $k$ and $l$ so that 
$ 
x_{ij}-x_{jk}-x_{kl}-x_{li}\leq 0, \hspace{2mm} 
x_{ij}+x_{jk}+x_{kl}-x_{li} \leq 2 \hspace{2mm}.  
$
Adding we get $2 x_{ij} - 2 x_{li}\le 2$ or $x_{ij} - x_{li} \le 1.$
We also have
$ 
x_{ij}+x_{jk}-x_{kl}+x_{li}\leq 2, \hspace{2mm} 
x_{ij}-x_{jk}+x_{kl}+x_{li} \leq 2 \hspace{2mm}.  
$
Adding we get $2 x_{ij} + 2 x_{li}\le 4$ or $x_{ij} + x_{li} \le 2.$
The inequalities imply that $2 x_{ij} \leq 3$ 
and, since $x_{ij}$ is an integer,
$x_{ij} \leq 1$. 
\end{proof}

\vspace{3mm}
\noindent
{\bf Estimating $|S^\pm_{12}|$.}
For this estimation we count the number of 
3-vertices, 4-vertices, 3-faces and 4-faces of $G_1$.
If $h$ is an integer, then the number of 3-vertices is $z=4h$ and the number of 
3-faces is 0. The number of 4-vertices of $G_1$ is $z(h-1)$. The number of 4-faces
is $z(h-1)+z/2$. 
If $h$ is a half integer, then the number of 3-vertices is 0, the number of 3-faces is
$z=4h$. The number of 4-vertices is $\lfloor h \rfloor z$. The number of 4-faces is 
$(\lfloor h \rfloor-1)z+z/2$. 

\vspace{3mm}
\noindent
{\bf Unifier of vertices and faces}.
Let a {\em unifier} be either a vertex or a face or $G_1$.
If $h$ is integer the number of
$3$-unifiers is $z$ and the number of 4-unifiers is 
$z(h-1) + z(h-1)+z/2=2z(h-1)+z/2$. If $h$ is a half integer, then the number of
3-unifiers is $z$ and he number of 4-unifiers is  
$\lfloor h \rfloor z +  (\lfloor h \rfloor-1)z+z/2 = 2\lfloor h \rfloor z - z/2$. 

\vspace{3mm}
\noindent
{\bf Cardinality of $S_{12}^\pm$ in terms of unifiers.} 
This cardinality is 4 times the number of $3$-unifiers 
plus 8 times the number of 4-unifiers of $G_1$. Thus, if $h$ is an integer,
the $|S_{12}^\pm|$ is $4z+8(2z(h-1)+z/2)=8z+16 zh - 16z = 16 zh - 8z \le 16 zh=4z^2.$
If $h$ is a half integer, then $|S_{12}^\pm|$ is $4z+8( 2\lfloor h \rfloor z - z/2)= 
16 \lfloor h \rfloor z \le 16 z h=4z^2$.
Thus, in every case, $|S_{12}^\pm| \le 4z^2=O(|E|).$
In fact we have $4z^2\le 10 |E| = 5 z^2-5 z 
\Longleftrightarrow 5z \le z^2$ 
$\Longleftrightarrow z \geq 5,$ which is clearly true, since there is no use in 
working with $K_4$. 

\begin{theorem}
 The number of linear inequalities defining $P_2$ is at most $11|E|$. Each of
 them involves the sum of no more than 4 edge variables 
 with $\pm 1$ coefficients. The right hand side 
 of them is either 0, 1 or 2.  
\end{theorem}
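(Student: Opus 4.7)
The plan is to assemble the three claims of the theorem from the structural facts established in Theorem \ref{theo:modelMaCut} and the unifier counting that immediately precedes the statement; the argument is bookkeeping on what is already in hand.

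First I would decompose $\mathbb{P}_{12}$ into two families of constraints: the signed-polygon inequalities indexed by $p \in S^\pm_{12}$, and the $|E|$ non-negativity bounds $x_e \ge 0$. The preceding unifier count has already shown $|S^\pm_{12}| \le 4z^2$ (four signings per $3$-unifier and eight per $4$-unifier, for both choices of $h$ integer or half integer), and the arithmetic $4z^2 \le 10|E|$ is equivalent to $z \ge 5$, which holds because $z$ is even and at least $6$. Adding the $|E|$ non-negativity bounds yields at most $10|E| + |E| = 11|E|$ inequalities, establishing the first claim.

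For the sparsity claim, I would invoke that every $p \in V_{12}$ is a $3$- or $4$-gon of $K_z$, since it is either the coboundary of a $3$- or $4$-vertex, or the boundary of a $3$- or $4$-face, of $G_1 = Pog_h$. Consequently, each constraint $\sum_{e \subset p} \sigma^e_p x_e \le p^+ - 1$ involves at most $4$ edge variables with coefficients $\sigma^e_p \in \{+1, -1\}$. A non-negativity bound $x_e \ge 0$, rewritten as $-x_e \le 0$, is a single-variable inequality with coefficient $-1$, so it trivially fits the claimed form. For the right-hand-side claim, I would observe that the defining parity condition on $S^\pm_{12}$ requires $p^+$ to be odd; together with $|p| \in \{3, 4\}$ this forces $p^+ \in \{1, 3\}$, whence $p^+ - 1 \in \{0, 2\}$, while the non-negativity bounds contribute right-hand side $0$. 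Thus every right-hand side lies in $\{0, 1, 2\}$.

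The only real obstacle—and it is modest—is confirming that no further inequalities have to be listed in the definition of $\mathbb{P}_{12}$; in particular, that the unitary upper bounds $x_e \le 1$ need not be included explicitly because they are implied by suitable pairs of $3$- or $4$-polygon constraints, exactly as was verified in the Case 3 and Case 4 analysis within the proof of Theorem \ref{theo:modelMaCut}. Once this is noted, the three numerical claims follow directly from the two enumeration steps above.
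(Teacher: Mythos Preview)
Your proposal is correct and follows essentially the same approach as the paper: invoke the preceding bound $|S^\pm_{12}|\le 10|E|$, add the $|E|$ non-negativity constraints, and then check the shape of each inequality. The paper's own proof is a terse two-line pointer to that discussion together with ``the bounds on each inequality are directly seen to hold,'' whereas you spell out the verification that $|p|\in\{3,4\}$ forces at most four variables per row with $\pm 1$ coefficients and that $p^+$ odd forces the right-hand side into $\{0,2\}\subset\{0,1,2\}$; this extra detail is welcome but not a different argument.
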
 
\begin{proof}
We have established in the above discussion that 
$|S_{12}^\pm| \leq 10|E|$. There are $|E|$
inequalities corresponding to the non-negativity of the variables. 
The bounds on each inequality are directly seen to hold.
So the result follows.
\end{proof}

\section{Conclusion}
The first author acknowledges the partial financial support of CNPq-Brazil, process number 
302353/2014-3. The second author acknowledges the financial support of FACEPE, IBPG-1295-1.03/12.
In a companion paper the authors show how to use Theorem \ref{theo:modelMaCut}
to improve considerably the running time of the IP-solver
 SCIP (\cite{Achterberg04scip, GamrathFischerGallyetal.2016}), working in the same MaxCut Problem, using the $\mathbb{P}_{12}$-model. 
Moreover each solution
provided by our algorithm (\cite {linshenriques2})
is exact and could be polynomially verifiable 
(polynomial in terms of the number of leaves in the set of 
{\em SSS-trees}). This acronym accounts for Sufficient Search Space Trees, a special 
set of trees).
These trees organize the computation and provide a proof that a solution is complete and correct.
In fact we use it to verify a solution produced by the solver. 
There is also, due to the simplicity of the model,
a number of interesting questions currently under investigation, 
involving theoretical and applied issues. 

\bibliographystyle{plain}
\bibliography{bibtexIndex.bib}
\end{document}